\def\a{\mathfrak{a}}
\def\R{{\bf R}}
\def\N{{\bf N}}
\def\C{{\bf C}}
\def\F{{\bf F}}
\def\to{\rightarrow}
\def\LL{\mathcal{L}}
\def\S{{\rm S}}
\def\Op{\hbox{\tiny Opdam}}
\def\Du{\hbox{\tiny Dunkl}}
\newcommand{\eqdef}{\mathrel{\mathop=}:}
\newtheorem{theorem}{Theorem}[section]
\newtheorem{lemma}[theorem]{Lemma}
\newtheorem{proposition}[theorem]{Proposition}
\newtheorem{corollary}[theorem]{Corollary}
\newtheorem{note}[theorem]{Notation}
\newtheorem{remark}[theorem]{Remark}
\newtheorem{conjecture}[theorem]{Conjecture}
\theoremstyle{definition}
\numberwithin{equation}{section}
\begin{document}

\begin{center}
{\bf 
Sharp estimates for the Opdam-Cherednik $W$-invariant heat kernel\\ for the root system $A_1$}\\
Piotr Graczyk\footnote{Laboratoire de math\'ematiques, Universit\'e d'Angers, France, piotr.graczyk@univ-angers.fr} and Patrice Sawyer\footnote{Department of Mathematics and Computer Science, Laurentian University, Canada,\\ psawyer@laurentian.ca}
\end{center}

\section*{abstract}
For the first time,
 the  estimates of the non-centered  Weyl-group invariant heat kernel  
for  curved Riemannian spaces  and for  Opdam-Cherednik Laplacians are studied systematically. 
 We prove sharp estimates for the root system $A_1$ with arbitrary multiplicity $k>0$.  Sharp estimates of
Opdam-Cherednik's  radial heat kernel are conjectured for any root system.

\section{Introduction}

In Opdam-Cherednik harmonic and stochastic analysis, the ``curved'' counterpart of Dunkl analysis for a root system $\Sigma$ in the underlying space $\a=\R^n$, a crucial role is played by the Opdam-Cherednik heat kernel $p_{\Op}(X,Y,t)$ and its Weyl-invariant version $h_{\Op}(X,Y,t)$.
Finding good estimates of  these heat kernels is therefore important. 

The novelty of this  paper
is the systematic study  of the  estimates of the non-centered radial (i.e. Weyl group invariant) heat kernel, denoted 
$h_{\Op}(X,Y,t)$, for  curved Riemannian spaces  and for an Opdam-Cherednik Laplacian (this is new except in \cite{PGPS1} for the complex $A$ case, see formula \eqref{est:complex} below).
We prove sharp estimates of $h_{\Op}(X,Y,t)$ for the   root system $A_1$ for arbitrary multiplicities $k>0$.  A conjecture on  sharp estimates of Opdam-Cherednik's  radial heat kernel $h_{\Op}(X,Y,t)$  is formulated  for any given root system, cf.{} Conjecture  \ref{conjecture}.

In the centered case $Y =0$, exact estimates of $h_{\Op}(X,0,t)$ were obtained  for all Riemannian symmetric spaces  by Anker and Ostellari \cite{AO} and, for Opdam-Cherednik's  radial heat kernel, by Schapira \cite{Schapira}. 
In \cite[Section 5.2, page 25]{Schapira}, the difficulty of getting sharp estimates
for the heat kernel in three variables $X$, $Y$, $t$ is pointed out and the result on $h_{\Op}(X,0,t)$ is commented as only a partial result. 


\subsection{Basics on Opdam-Cherednik operators and heat kernel}
$\;$\\

For a good introduction on Opdam-Cherednik analysis, the reader should consider
the lecture notes \cite{OpdamLN} by Opdam, \cite{Anker} by Anker and 
the papers \cite{AAS, Opdam}. The Opdam-Cherednik heat kernel is defined and studied by Schapira  \cite{ Schapira}.

 We provide here some details and notations on Opdam-Cherednik analysis.  For every root $\alpha\in \Sigma$, let $\sigma_\alpha(X)=X-2\,\frac{\alpha(X)}{\langle \alpha,\alpha\rangle}\,\alpha$. 
The Weyl group $W$ associated to the root system is generated by the reflection maps $\sigma_\alpha$. One chooses positive roots $\Sigma^+$ and one defines
the positive Weyl chamber $\a^+=\{X\in\a\colon \alpha(X)>0~\forall X\in\Sigma^+\}$. 
Note that in this paper   $n=\dim(\hbox{lin} \Sigma)$.  In the symmetric space setting, $n$ denotes the rank.

A function $k: \Sigma \to [0,\infty)$ is called a multiplicity function if it is invariant under the action of $W$ on $\Sigma$.
\begin{align*}
T_\xi\,f(X)&=\partial_\xi\,f(X)+\sum_{\alpha\in \Sigma^+}\,k(\alpha)\,\alpha(\xi)
\,\frac{f(X)-f(\sigma_\alpha\,X)}{1-e^{-\alpha(X)}}-\rho(k)(\xi)\,f(X),
\end{align*}
where $\rho(k)= \frac{1}{2}\,\sum_{\alpha\in \Sigma_+} k(\alpha)\alpha$.
The operators $T_\xi$'s, $\xi\in\a$, form a commutative family.

For fixed $ Y\in\a$, the  kernel $G_k(\cdot,Y)$ is  the only real-analytic solution to the system
\begin{align*}
\left.T_\xi(k)\right\vert_X\,G_k(X,Y)
=\langle \xi,Y\rangle\, G_k(X,Y),~\forall\xi\in\a
\end{align*}
with $G_k(0,Y)=1$.  In fact, $G_k$ extends to a holomorphic function  on $(\a+i\,U)\times \C^n$ where $U$ is a neighbourhood of 0 (refer to \cite[Th.{} 3.15]{Opdam}).
Its $W$-invariant version $G^W_k(X,\lambda)$ is called a Heckman-Opdam hypergeometric function.

The Heckman-Opdam  hypergeometric functions related to root systems are the extension of the spherical functions for noncompact symmetric spaces $\phi_\lambda$ to arbitrary positive multiplicities.
In this paper, we use the latter terminology and notation.  We have
\begin{align*}
\phi_\lambda(X) =G^W_k(X,\lambda)=\frac{1}{|W|}\sum_{w\in W}\,G_k(w\cdot X,\lambda).
\end{align*}

Let $\omega_k(X):=
\prod_{\alpha\in \Sigma^+}\,\vert2\,\sinh\frac{\langle \alpha,X\rangle}{2}\vert^{2\,k(\alpha)}
$
be the Opdam-Cherednik weight function  on $\a$ (c.f.{} \cite[p.{} 40]{Anker}).
Recall that the Opdam-Cherednik transform of a $W$-invariant function $f$ on $\a$
\begin{align*}
\hat f(\lambda):= c_k^{-1} \int_{\a} f(x) \phi_{-i\lambda}(X)\omega_k(X)dX,\qquad \lambda\in \a^*,
\end{align*}
plays the role of the spherical Fourier transform in the $W$-invariant Opdam-Cherednik analysis.

\subsubsection{Modified Laplacian and Heat operator}

Let
${\bf e}_i$, $i=1,\ldots,n$ be the canonical basis of $\a$.     Let
 $\LL=\sum_{i=1}^n T^2_{{\bf e}_i}$ be the Opdam-Cherednik Laplacian
on $C^{2}(\a)$.   We define the modified Opdam-Cherednik Laplacian by
\begin{align*}
L_{\Op}=  \LL-|\rho|^2.
\end{align*}
Note that in \cite[ Section 5.1, page 242]{Schapira},
a factor $1/2$ is used in front of $\LL-|\rho|^2$  in the definition of the modified Opdam-Cherednik Laplacian.
We keep to the convention without the factor $1/2$, following Helgason \cite{Hel} on Riemannian symmetric spaces,
Anker et al.{} \cite{AnkerDH} and Roesler \cite{Roesler} in the Dunkl setting.
In practice, formulas for the Opdam-Cherednik heat kernel  from \cite{Schapira} correspond to those
of this paper with $t/2$ replaced by $t$.

The Opdam-Cherednik heat operator $D_{\Op}$ is defined on $C^{2,1}(\a\times\R)$
 by
\begin{align*}
D_{\Op}= {\partial\over\partial t} -L_{\Op}.
\end{align*}
 
We will denote with a superscript $W$ the $W$-invariant variants of the operators: $\LL^W$, $L^W_{\Op}$ and $D^W_{\Op}$.

\subsubsection{Heat kernel}

The Opdam-Cherednik  heat kernel $p_{\Op}(X,Y,t)$  is defined by Schapira in \cite[Definition 5.1]{Schapira}. For all $X,Y\in\a$ and $t>0$
\begin{align*}
p_{\Op}(X,Y,t)= \int_{\a }e^{ -(|\lambda|^2+|\rho|^2)\,t}
G_k(X,i\lambda) G_k(-Y,i\lambda) d\nu(\lambda).
\end{align*}
where $\nu$ is the asymmetric Plancherel measure on $\a$, see e.g. \cite[p.18]{Schapira}

The $W$-invariant heat kernel is defined by
\begin{align*}
h_{\Op}(X,Y,t)=\frac1{|W|} \sum_{w\in W} p(X,wY,t).
\end{align*}

Consequently, we have
\begin{align}\label{Plancherel}
h_{\Op}(X,Y,t)= \int_\a e^{ -(|\lambda|^2+|\rho|^2)\,t}
\phi_{i\lambda}(X) \phi_{i\lambda}(-Y) d\nu'(\lambda),
\end{align}
where $\nu'$ is the symmetric Plancherel measure, called also Harish-Chandra measure,
see \cite[p.21]{Schapira}.

Note that throughout this paper we use the notation $h$  for $W$-invariant heat kernels and $p$ for the general  heat kernels, both in the Opdam-Cherednik and the Dunkl settings.

The estimates  of the functions $G_k(X,i\lambda) $ and $\phi_{i\lambda}(X)$ proven in \cite{Schapira} guarantee the convergence of the integrals defining the kernels $p_{\Op}$ and $h_{\Op}$.

The kernel $h_{\Op}(X,Y,t)$ is the fundamental solution of the $D^W$-Cauchy problem \cite{Schapira}.

\subsection{Dunkl case}
In the papers \cite{ PGPS1,PGPS0}, we studied the $W$-invariant heat kernel on flat symmetric spaces 
and on  $\R^n$ in the rational Dunkl setting for the root systems of type $A$.

In \cite{PGPS0}, we proved  exact estimates for the $W$-invariant Dunkl heat kernel $h_{\Du}(X,Y,t)$, for the root systems  $A_n$ in $\R^n$ with arbitrary positive multiplicities:
\begin{align}\label{FLAT}
h_{\Du}(X,Y,t)\asymp  \frac{t^{-n/2}\,e^{-|X-Y|^2/(4\,t)}}{\prod_{\alpha>0}\,(t+\alpha(X)\,\alpha(Y))^{k}}
\qquad (X,Y\in \a^+, t>0).
\end{align}

 Anker, Dziuba\'nski and Hejna \cite{AnkerDH, DH} studied the estimates of the non $W$-invariant Dunkl heat kernel $p_{\Du}(X,Y,t)$  for any root system but they did not get the same lower and upper estimates. 

\subsection{Complex curved case}
In the complex case $k=1$,  we proved in \cite[Corollary 13]{PGPS1} the following
sharp estimate of the  heat kernel  $h(X,Y,t)$ for the curved irreducible Riemannian
symmetric space of type $A_n$ in the complex case: 
\begin{align}\label{est:complex}
h(X,Y,t)&\asymp t^{-\frac{n}{2}} \,e^{\frac{-|X-Y|^2}{4t}} e^{-|\rho|^2 t} \,e^{-\rho(X+Y)}
\,\prod_{\alpha>0}\,\frac{(1+\alpha(X))\,(1+\alpha(Y))}{t+\alpha(X)\,\alpha(Y)}.
\end{align}

In particular, for the 3-dimensional real  hyperbolic space ${\rm H}^3(\R)$, we have $n=k=\rho=1$ and the formula \eqref{est:complex} implies that 
\begin{align}\label{est:complexhyperb}
h(X,Y,t)&\asymp  t^{-\frac{1}{2}} \,e^{\frac{-(X-Y)^2}{4t}} e^{- t} \,e^{-(X+Y)} \,\frac{(1+X)\,(1+Y)}{t+XY}
\qquad\quad (X,Y\ge 0, t>0).
\end{align}

\subsection{Conjecture for sharp estimates of $h_{\Op}(X,Y,t)$}

We conjecture that in the general  Opdam-Cherednik setting, we have the following sharp estimate.

\begin{conjecture}\label{conjecture}

\begin{align*}
h_{\Op}(X,Y,t)\asymp t^{-\frac{n}{2}} \,e^{\frac{-|X-Y|^2}{4t}} e^{-|\rho|^2 t} \,e^{-\rho(X+Y)}
\,\prod_{\alpha\in \Sigma^{++} }&
\,(1+\alpha(X))\,(1+\alpha(Y))
\\&\nonumber
\,
\times\frac{(t+1+\alpha(X+Y))^{k(\alpha)+ k(2\alpha)-1}}
{(t+\alpha(X)\,\alpha(Y))^{k(\alpha)+k(2\alpha)}}
\end{align*}
where $\Sigma^{++}$ is the set of positive indivisible roots.
\end{conjecture}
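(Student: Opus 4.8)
The plan is to attack Conjecture \ref{conjecture} from the Plancherel representation \eqref{Plancherel} together with the sharp global estimates of the Heckman--Opdam hypergeometric functions. By $W$-invariance we may assume $X,Y\in\overline{\a^+}$. The first step is to insert the Harish-Chandra--Opdam expansion $\phi_{i\lambda}(X)=\sum_{w\in W}{\bf c}(w\lambda)\,\Phi_{iw\lambda}(X)$, valid on the regular set, where $\Phi_{i\mu}(X)\asymp e^{-\rho(X)}e^{i\mu(X)}$ deep in the chamber, with ``global'' refinements (due to Opdam \cite{Opdam} and Schapira \cite{Schapira}) that are uniform up to the walls and carry the polynomial factors $\prod_{\alpha}(1+\alpha(X))$. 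Substituting into \eqref{Plancherel} turns $h_{\Op}$ into a finite sum
\begin{align*}
\sum_{w,w'\in W}\int_{\a} e^{-(|\lambda|^2+|\rho|^2)t}\,{\bf c}(w\lambda)\,{\bf c}(w'\lambda)\,\Phi_{iw\lambda}(X)\,\Phi_{iw'\lambda}(-Y)\,d\nu'(\lambda),
\end{align*}
where the Plancherel density $d\nu'(\lambda)=|{\bf c}(\lambda)|^{-2}\,d\lambda$ behaves like a product over $\Sigma^{++}$ of factors of polynomial growth of degree $2(k(\alpha)+k(2\alpha))$ at infinity and is regular near $0$.

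The core of the argument is then a multidimensional Laplace/stationary-phase analysis of each such integral, uniform in the three parameters $X,Y,t$, in the spirit of Anker--Ostellari \cite{AO}. After absorbing the oscillatory factors $e^{i(w\lambda)(X)-i(w'\lambda)(Y)}$ into the Gaussian $e^{-|\lambda|^2 t}$, the phase has a single nondegenerate critical point producing the leading factor $t^{-n/2}e^{-|X-Y|^2/(4t)}$ for the diagonal terms $w=w'$; the off-diagonal terms are of lower order away from the walls but must be kept \emph{near} the walls, where the interference between the Weyl summands is precisely what should generate the subtle ratio $(t+1+\alpha(X+Y))^{k(\alpha)+k(2\alpha)-1}/(t+\alpha(X)\alpha(Y))^{k(\alpha)+k(2\alpha)}$. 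One expands the slowly varying amplitude ${\bf c}(w\lambda)\,{\bf c}(w'\lambda)\,\Phi_{iw\lambda}(X)\,\Phi_{iw'\lambda}(-Y)$ around the critical point, integrates term by term, and resums the result into the conjectured product, fixing all normalizations by consistency with the three already established cases: the centered kernel $h_{\Op}(X,0,t)$ of Schapira \cite{Schapira}, the complex case \eqref{est:complex}, and the flat Dunkl limit \eqref{FLAT} (which is recovered by the standard $t\to 0$, large-distance rescaling).

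It is natural to proceed in increasing generality: first the rank-one $BC_1$ system with two multiplicities $k(\alpha),k(2\alpha)$, which should follow from a computation parallel to the $A_1$ case treated in this paper; then reducible rank-one systems, where the kernel factorizes as a product of rank-one kernels and the estimate is multiplicative; and finally the higher-rank systems, where no such factorization is available. I expect the main obstacle to be exactly this last step: controlling the Weyl-group interference terms two-sidedly, and \emph{uniformly up to the walls and corners of $\overline{\a^+}$ as well as in $t$}. Near a wall $\alpha(X)\to 0$ one loses the clean exponential separation of the series $\Phi_{iw\lambda}$, the ${\bf c}$-functions acquire zeros, and the balance between the small-$\lambda$ and large-$\lambda$ regimes of the $\lambda$-integral changes; producing matching lower and upper bounds there — not merely the (already nontrivial) upper bound — is the delicate point, and it is why the conjecture is currently established only for $A_1$.

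A possible alternative route, better suited to induction on the multiplicity, is to use Opdam's shift operators to pass from $k$ to $k+1$ while transporting the estimate, and then analytic interpolation in $k$ to reach arbitrary $k>0$; here the crux is again to make the shift-operator bounds sharp in both directions uniformly near the walls. Either way, the reduction of the higher-rank case to the rank-one estimates of this paper, via a careful asymptotic analysis of the interference near the chamber boundary, is the step I expect to be hardest, and it is the natural next target beyond the present $A_1$ result.
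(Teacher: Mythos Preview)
The statement you are addressing is a \emph{conjecture}, and the paper does not prove it: it is stated as Conjecture~\ref{conjecture} and left open in general. The only case proved in the paper is the $A_1$ case, recorded separately as Theorem~\ref{Main}. So there is no ``paper's own proof'' of this statement to compare with, and what you have written is not a proof either --- you say so yourself (``I expect the main obstacle to be\ldots'', ``it is why the conjecture is currently established only for $A_1$''). As a proof of Conjecture~\ref{conjecture}, the proposal therefore has a genuine gap: the central step --- turning the Weyl-group interference near the walls into the precise product of ratios $(t+1+\alpha(X+Y))^{k(\alpha)+k(2\alpha)-1}/(t+\alpha(X)\alpha(Y))^{k(\alpha)+k(2\alpha)}$ with matching upper and lower bounds, uniformly in $X,Y,t$ --- is asserted as the goal but not carried out, and you correctly identify it as the hard open problem.

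It is nonetheless worth contrasting your proposed line of attack with the method the paper actually uses for the $A_1$ case. You suggest working directly from the Plancherel integral \eqref{Plancherel} via Harish-Chandra expansions and a uniform stationary-phase/Laplace analysis in $\lambda$. The paper does something quite different: it never estimates the $\lambda$-integral directly. Instead it uses the weak parabolic minimum principle (Proposition~\ref{Ao}) in the style of Anker--Ostellari. One writes down, on each of several regions in $(r,s,t)$-space, explicit comparison functions $h_{\mathcal S}^{\pm}$ that are (i) sharply equivalent to the conjectured right-hand side and (ii) super- or sub-solutions of the heat operator $D_{\Op}$; these are then glued by a partition of unity, and the minimum principle transports the boundary comparison $h^{\pm}\gtrless h_{\Op}$ into the interior. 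The Plancherel formula enters only through the crude bound of Lemma~\ref{crude} to ensure decay at infinity. Your approach, if it could be made to work, would be more direct and potentially more amenable to higher rank; the paper's approach sidesteps the delicate $\lambda$-analysis entirely but requires guessing the correct comparison functions region by region, which is what makes the extension beyond $A_1$ nontrivial.
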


This Conjecture is compatible with
the following sharp global estimate for $h_{\Op}(X,0,t)$ proved by
Schapira \cite[Theorem 5.2]{Schapira} in the
general Opdam--Cherednik setting.
Here $\gamma=\sum_{\alpha\in \Sigma^+} k(\alpha)$ and $t/2$ in  \cite[Theorem 5.2]{Schapira} is replaced by $t$:
\begin{align}\label{est:AO}
h_{\Op}(X,0,t)&\asymp t^{-\gamma-\frac{n}{2}} \,e^{\frac{-|X|^2}{4t}}
e^{-|\rho|^2 t} \,e^{-\rho(X)}
\,\prod_{\alpha\in \Sigma^{++} }\,(1+\alpha(X))
\,(t+1+\alpha(X))^{k(\alpha)+k(2\alpha)-1},
\end{align}
This result contains an earlier estimate of Anker and Ostellari
\cite{AO} on the curved symmetric spaces $X$ ($k=\frac{1}{2}$, 1, 2) 
and its proof is based on the proof of Anker and Ostellari.

Conjecture \ref{conjecture} is also compatible with an asymptotic of $p_{\Op}(x,\sqrt{t}y,t)$ when $t\to\infty$ proven
in  \cite[Proposition 5.3]{Schapira}.

\begin{remark}
One  might think that $h_{\Op}(X,Y,t)$ is a function of the  distance $d(X,Y)$.
This is easily shown to be false even in the geometric (\emph{i.e.} symmetric space) case.

Consider the hyperbolic space  ${\rm H}^3(\R)\equiv SL(2,C)/SU(2)$.  In this case, by the explicit formula for the spherical functions (\cite
{Hel}) and the relation between the  spherical functions  and the $W$-invariant heat kernel (\cite{deJeu}, \cite{Roesler}), see also \cite[Remark 2.9]{PGPSMN} for a simple direct proof, we have
\begin{align*}
h_{\Op}(X,Y,t)=\frac{t^{-1/2} }{2 \delta(X)^{1/2} \delta(Y)^{1/2}}
(e^{-(X-Y)^2/(4t)}  -e^{-(X+Y)^2/(4t)}), \qquad  (
X,Y\ge 0, t>0).
\end{align*}

If we had $h_{\Op}(X,Y,t)=g_t(d(X,Y))$, then $h_{\Op}(X,X,t)=g_t(0)$  would be independent of $X$, but
\begin{align*}
h_{\Op}(X,X,t)=\frac{t^{-1/2} }{2 \delta(X)}  (1-e^{-X^2/t})
\end{align*}
which depends on $X$ and contradicts the assumption.
\end{remark}

In this paper, we will prove Conjecture \ref{conjecture} in the $A_1$ case.

\begin{theorem}\label{Main}
Conjecture \ref{conjecture} holds in the $A_1$ with arbitrary multiplicity $k>0$, namely
\begin{align}\label{estim:conjecture}
h_{\Op}(r,s,t)&\asymp t^{-\frac{1}{2}} \,e^{\frac{-|r-s|^2}{4t}} e^{-k^2 t} \,e^{-k\,(r+s)}
\,(1+r)\,(1+s) \,\frac{(t+1+r+s)^{k-1}}{(t+r\,s)^k}\\
&\asymp t^{-\frac{1}{2}} \,e^{\frac{-|r-s|^2}{4t}} e^{-k^2 t} \,\phi_0(r)\,\phi_0(s)
\,\frac{(t+1+r+s)^{k-1}}{(t+r\,s)^k}
\end{align}
for $t>0$, $r\geq 0$ and $s\geq0$.
\end{theorem}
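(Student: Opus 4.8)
The plan is to work directly from the Plancherel-type formula \eqref{Plancherel}, which in the $A_1$ case reads $h_{\Op}(r,s,t)=\int_0^\infty e^{-(\lambda^2+k^2)t}\,\phi_{i\lambda}(r)\,\phi_{i\lambda}(s)\,|c(\lambda)|^{-2}\,d\lambda$ (up to a normalizing constant), where $\phi_{i\lambda}$ is the Jacobi/Heckman--Opdam function for $A_1$ and $|c(\lambda)|^{-2}$ is the explicit Harish-Chandra density, which for $A_1$ behaves like $\lambda^2$ near $0$ and like $|\lambda|$ (times an analytic factor) for large $\lambda$. Since everything is one-dimensional, we can use the classical integral representation of the Jacobi function $\phi_{i\lambda}(r)$ together with the known sharp global bound $\phi_0(r)\asymp (1+r)e^{-kr}$ and the pointwise estimate $\phi_{i\lambda}(r)\le \phi_0(r)$ for real $\lambda$. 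First I would separate the integral into a ``small $\lambda$'' part (say $|\lambda|\le 1/\sqrt t$ combined with $|\lambda|\le 1$) and a ``large $\lambda$'' part, since the two regimes are governed by different phenomena: the large-$\lambda$ Gaussian decay produces the factor $t^{-1/2}e^{-|r-s|^2/(4t)}$, while the small-$\lambda$ region, where $\phi_{i\lambda}\approx \phi_0$, contributes the polynomial corrections in $t$, $r$, $s$.

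The key technical step is a precise two-sided estimate for the product $\phi_{i\lambda}(r)\phi_{i\lambda}(s)$ that interpolates between the $\lambda=0$ value $\phi_0(r)\phi_0(s)\asymp (1+r)(1+s)e^{-k(r+s)}$ and the oscillatory decay for large $\lambda$. Concretely, I would establish that, uniformly for $\lambda\ge 0$ and $r\ge 0$,
\begin{align*}
\phi_{i\lambda}(r)\asymp \phi_0(r)\,\bigl(1+\lambda(1+r)\bigr)^{-1}\bigl(1+\lambda r\bigr)^{-(k-1/2)}\quad\text{(modulo an oscillating factor)},
\end{align*}
or rather the analogous statement that appears after integrating against the Plancherel density; this is the one-dimensional analogue of the Anker--Ostellari type estimates and can be extracted from the explicit hypergeometric form of $\phi_{i\lambda}$. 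Then, after the change of variables $\lambda=\mu/\sqrt t$, the integral becomes $t^{-1/2}e^{-k^2 t}$ times a manageable integral whose value is comparable to $e^{-|r-s|^2/(4t)}\phi_0(r)\phi_0(s)$ multiplied by a factor comparable to $(t+1+r+s)^{k-1}/(t+rs)^k$; the denominator $(t+rs)^k$ arises from the region where $\mu r/\sqrt t$ and $\mu s/\sqrt t$ are both large, and the numerator $(t+1+r+s)^{k-1}$ from the competition between the $(k-1/2)$-power factors and the extra $\lambda$ in the Plancherel density near $\lambda\sim 1$.

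For the lower bound I would restrict the integral to a favorable subregion of $\lambda$ (roughly $\lambda\asymp 1/\sqrt{t+rs}$ when $rs\gtrsim t$, and $\lambda\lesssim 1/\sqrt t$ otherwise), on which $\phi_{i\lambda}(r)\phi_{i\lambda}(s)$ keeps a definite sign and size, and show that this already produces the claimed order; the upper bound follows from the two-sided estimate above integrated over all $\lambda$, splitting into the cases $rs\le t$ and $rs\ge t$ and, within the latter, $r+s\le t$ versus $r+s\ge t$, so that each of the elementary integrals can be evaluated to the stated power. The second asymptotic equivalence in the theorem is then immediate from $\phi_0(r)\asymp(1+r)e^{-kr}$. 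The main obstacle I anticipate is controlling the oscillation of $\phi_{i\lambda}(r)$ for large $\lambda$ with $r$ large: a crude modulus bound loses the Gaussian factor $e^{-|r-s|^2/(4t)}$, so one must keep the phase and argue, as in the symmetric-space literature, by either a stationary-phase/contour-shift argument in $\lambda$ or by inserting the exact $\mathrm{H}^3(\R)$-type kernel identity and a perturbation in $k$; making this rigorous and uniform in all three parameters $r,s,t$ simultaneously is the delicate part, and is presumably where the bulk of the paper's work lies.
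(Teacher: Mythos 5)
The proposal takes a genuinely different route from the paper. The paper does not estimate the Plancherel integral \eqref{Plancherel} directly at all. Instead it uses the weak parabolic minimum principle (Proposition \ref{Ao}): it builds, region by region in $(r,s,t)$-space, explicit auxiliary functions comparable to the conjectured estimate $E(r,s,t)$ whose images under $D_{\Op}$ have a definite sign, glues them with a smooth partition of unity, and then compares with $h_{\Op}$ via the minimum principle. The small-$t$ region is handled by transferring from the already-known Dunkl estimate \eqref{FLAT} through the conjugation relating $L_{\Op}$ and $L_{\Du}$. This approach sidesteps any need to control the oscillation of $\phi_{i\lambda}(r)$ for large $\lambda$, which is precisely the step you flag as problematic.

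And that flag is where the gap in your proposal lies: it is not a minor technicality you could fill in later, it is the whole difficulty. A modulus bound on $\phi_{i\lambda}(r)\phi_{i\lambda}(s)$ can never recover the Gaussian $e^{-(r-s)^2/(4t)}$, and the ``two-sided estimate modulo an oscillating factor'' you state is never made precise; the oscillating factor is exactly what carries the phase that produces the Gaussian. Moreover, the candidate pointwise bound
$\phi_{i\lambda}(r)\asymp\phi_0(r)\bigl(1+\lambda(1+r)\bigr)^{-1}(1+\lambda r)^{-(k-1/2)}$
does not even match the known large-$\lambda r$ amplitude $|c(\lambda)|\,e^{-kr}\sim\lambda^{-k}e^{-kr}$: your right-hand side behaves like $\lambda^{-(k+1/2)}r^{-(k-1/2)}e^{-kr}$ for $\lambda,r$ large, so already the magnitude is off, before one even touches the phase. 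The suggested remedies (stationary phase in $\lambda$, contour shift, perturbation from the $k=1$ identity) are plausible directions, but each one would need a substantial uniform-in-$(r,s,t)$ analysis that the proposal neither supplies nor reduces to a clear lemma. In short, you have correctly located the hard core, but the proposal stops short of it, whereas the paper avoids it altogether by the maximum-principle construction.
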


\begin{remark}
The estimate $\phi_0(X)\asymp e^{-\rho(X)}\,\prod_{\alpha\in \Sigma^+}\,(1+\alpha(X))$ is known in its full generality (\cite[Theorem 3.1]{Schapira}).
\end{remark}

\section{Auxiliary results}

The results gathered in this chapter will be useful in the proof of the main theorem.

\subsection{Parabolic minimum principle }

\begin{proposition}[Weak parabolic minimum principle for unbounded domains]\label{Ao}
Let $\Omega\subseteq \R^d$  be a domain (i.e.{} a connected open subset). Let $t_0 < t_1$ be two real numbers.
Let $D=D_{\Du} $  or $D=D_{\Op} $.

Assume that $u\in C^{2,1}(\Omega\times (t_0,t_1])\cap C^0(\overline{\Omega}\times[t_0,t_1])$ satisfies
\begin{itemize}
\item[(i)] $D\,u\geq 0$ in $\Omega\times (t_0,t_1]$ (supersolution),
\item[(ii)] $u\geq 0$ on the boundary component $\Omega\times\{t_0\}$ and $\partial\,\Omega\times [t_0,t_1]$.
\end{itemize}
Then $u\geq 0$ throughout $\overline{\Omega}\times [t_0,t_1]$.
\end{proposition}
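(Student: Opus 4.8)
The plan is to prove the weak parabolic minimum principle by the classical "perturbation + compact exhaustion" technique adapted to the heat operators $D_{\Du}$ and $D_{\Op}$, whose elliptic parts are of the form $\sum_i \partial_i^2 + (\text{first-order terms}) + (\text{nonlocal difference terms with nonnegative coefficients})$. The first step is to handle the strict case: suppose $v \in C^{2,1}(\Omega\times(t_0,t_1])\cap C^0(\overline\Omega\times[t_0,t_1])$ satisfies $Dv > 0$ strictly in $\Omega\times(t_0,t_1]$ and $v \geq 0$ on the parabolic boundary, and in addition $v$ attains its infimum over $\overline\Omega\times[t_0,t_1]$ at some interior point $(X_0,\tau_0)$ with $\tau_0 > t_0$ and $X_0\in\Omega$. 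At such a point one has $\partial_t v(X_0,\tau_0) \leq 0$ (with equality unless $\tau_0 = t_1$, but in any case $\leq 0$), $\nabla v(X_0,\tau_0) = 0$, and the Hessian of $v$ is positive semidefinite, so $\sum_i \partial_i^2 v(X_0,\tau_0) \geq 0$. Crucially, the nonlocal terms in $T_\xi^2$ (equivalently in $\LL$) evaluated at a minimum are of the form $\text{(nonnegative coefficient)}\times(v(X_0,\tau_0) - v(\sigma_\alpha X_0,\tau_0))$ and products/iterations thereof; since $(X_0,\tau_0)$ is a global minimum, each such difference is $\leq 0$, so the nonlocal contribution to $L_{\Op} v$ (resp. $L_{\Du} v$) at that point is $\leq 0$. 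Putting these together, $Dv(X_0,\tau_0) = \partial_t v - L v \geq 0 - (\text{something} \leq 0)$... one must be careful with signs here: the point is that $Lv(X_0,\tau_0) \geq 0$ would be the naive conclusion, hence $Dv(X_0,\tau_0) \leq 0$, contradicting $Dv > 0$. So under strict inequality the infimum cannot be attained at an interior-in-$\Omega$, later-in-time point, forcing $\inf v \geq 0$ provided the infimum is attained somewhere.

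The next step addresses two gaps: the infimum over the unbounded set $\overline\Omega\times[t_0,t_1]$ need not be attained, and we only have $Du \geq 0$, not $> 0$. For the first, I would exhaust $\Omega$ by bounded domains $\Omega_R = \Omega \cap B(0,R)$ and apply the argument on the compact parabolic cylinder $\overline{\Omega_R}\times[t_0,t_1]$; this requires controlling $u$ on the new artificial boundary $(\Omega\cap\partial B(0,R))\times[t_0,t_1]$, which is where an auxiliary barrier function is needed. The standard choice is a function of the form $w_\varepsilon(X,\tau) = u(X,\tau) + \varepsilon\,e^{\lambda\tau}(1 + |X|^2)$ (or $\varepsilon(2dt + |X|^2)$ in the purely Euclidean case), with $\lambda$ chosen large enough that $D w_\varepsilon > 0$: one checks that $L_{\Op}(e^{\lambda\tau}(1+|X|^2))$ grows at most polynomially in $|X|$ while $\partial_t$ of it contributes $\lambda e^{\lambda\tau}(1+|X|^2)$, so for $\lambda$ large (depending on the multiplicity $k$, the roots, and $|\rho|^2$) we get $D w_\varepsilon > 0$ everywhere — here one uses that the difference terms $\frac{g(X)-g(\sigma_\alpha X)}{1-e^{-\alpha(X)}}$ applied to $g(X)=1+|X|^2$ are bounded by an affine function of $|X|$ on each Weyl chamber, and that $-|\rho|^2(1+|X|^2)$ is harmless. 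Then $w_\varepsilon \geq 0$ on $\Omega\times\{t_0\}$ and on $\partial\Omega\times[t_0,t_1]$ by hypothesis (ii), and on $(\Omega\cap\partial B(0,R))\times[t_0,t_1]$ we have $w_\varepsilon \geq u - \|u\|_\infty^{?}$... — actually the role of the barrier is precisely to dominate: for fixed $(X,\tau)$ with $|X| < R$, if $u$ were bounded below we could just take $R\to\infty$; since $u\in C^0(\overline\Omega\times[t_0,t_1])$ but $\overline\Omega$ may be unbounded, we impose (implicitly, or it is part of the standing setting) that $u$ does not decay to $-\infty$ too fast, and the barrier term $\varepsilon e^{\lambda\tau}(1+|X|^2)\to\infty$ as $|X|\to\infty$ forces $w_\varepsilon \geq 0$ on $\partial B(0,R)$ for $R$ large. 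Applying the strict case to $w_\varepsilon$ on $\overline{\Omega_R}\times[t_0,t_1]$ gives $w_\varepsilon \geq 0$ there; letting $R\to\infty$ and then $\varepsilon\to 0$ yields $u\geq 0$ on $\overline\Omega\times[t_0,t_1]$.

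The main obstacle I anticipate is the verification that the nonlocal (difference) part of the operator has the correct sign at a global minimum, together with the boundedness estimate on the barrier. For $D_{\Op}$ the subtlety is that $\LL = \sum_i T_{{\bf e}_i}^2$ is a second-order operator whose expansion contains not only terms $v(X) - v(\sigma_\alpha X)$ but also cross terms coming from squaring, e.g. involving $\partial_\xi$ of the difference quotient and iterated reflections $\sigma_\alpha\sigma_\beta$; one must argue that, evaluated at a global minimum where $\nabla v = 0$, all these contributions are either zero or have the minimum-compatible sign. A clean way is to use the known expression for $\LL^W$ acting on $W$-invariant functions (the radial part, a Jacobi-type operator with a genuinely elliptic second-order part plus first-order drift and no nonlocal part), but since the proposition is stated for general $u$ on a general domain — not necessarily $W$-invariant — I would instead invoke the explicit second-order form of $\LL$ on each Weyl chamber where it is a uniformly elliptic operator with smooth coefficients plus the nonlocal reflection terms, and treat the reflection terms by the sign argument above; on the walls (where $1 - e^{-\alpha(X)}$ vanishes) one uses that the difference quotient extends smoothly (real-analyticity of the relevant data) so no singularity actually occurs. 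This case analysis, and pinning down exactly which growth/regularity hypothesis on $u$ at infinity makes the barrier argument go through, is the technical heart of the proof; everything else is the textbook parabolic maximum principle.
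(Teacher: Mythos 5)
The paper does not prove this proposition from scratch: its ``proof'' is a pointer to R\"osler (Dunkl case), Schapira (Opdam--Cherednik case) and DiBenedetto (general parabolic theory), together with the remark that the extension to unbounded domains is justified because the heat kernel vanishes at infinity (Corollary~\ref{cor:heat_infty}). Your sketch, by contrast, attempts the proof directly via the textbook ``perturbation $+$ exhaustion'' scheme. That is in fact the same underlying mechanism as in the cited references, so the route is not really different; what differs is that you try to supply the details that the paper delegates to the literature.

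Two of those details are genuine gaps that you flag but do not close, and the first one is not merely technical. \textbf{(1) Growth at infinity.} As stated, with only $u\in C^0(\overline\Omega\times[t_0,t_1])$, the proposition is \emph{false}: the Tychonoff nonuniqueness example for the Euclidean heat equation ($u$ smooth, $u(\cdot,0)\equiv 0$, $\partial_t u-\Delta u=0$, $u\not\equiv 0$) already shows that without a one-sided growth bound the parabolic minimum principle fails on $\R^d\times[0,T]$. Your barrier $\varepsilon e^{\lambda\tau}(1+|X|^2)$ dominates only if $u$ is bounded below (or grows slower than quadratically); you write ``we impose (implicitly, or it is part of the standing setting)'', but this is exactly the hypothesis that must be added. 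The paper resolves it by applying the proposition only to $u=h_{\mathcal R,c}-h_{\Op}$ (and its Dunkl analogue), where Corollary~\ref{cor:heat_infty} guarantees $h_{\Op}\to 0$ as $|X|\to\infty$ and $h_{\mathcal R,c}$ is likewise controlled, so $u$ is bounded below. If you want a self-contained statement you must add a hypothesis such as $\liminf_{|X|\to\infty}\inf_{t\in[t_0,t_1]}u(X,t)\ge 0$, or a Tychonoff-type bound $u\ge -Ae^{a|X|^2}$, and then run your barrier argument. \textbf{(2) Sign of the nonlocal part at a minimum.} Working with $\LL=\sum_i T_{{\bf e}_i}^2$ directly is awkward because squaring produces cross terms (derivatives of difference quotients, compositions of reflections) whose sign at a minimum is not obvious. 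The clean route, which you allude to, is to use the known second-order form of the Dunkl/Heckman--Opdam Laplacian: an elliptic local part $\Delta$, a first-order drift whose contribution vanishes where $\nabla u=0$, and a difference operator $\sum_{\alpha>0}c_\alpha(X)\bigl(u(\sigma_\alpha X)-u(X)\bigr)$ with $c_\alpha\ge 0$; at a global minimum every factor then has the favorable sign. You should either quote that decomposition or cite R\"osler/Schapira for it, rather than leaving it as ``the technical heart.'' With these two points made explicit, your outline becomes a correct proof; as written it is a plausible plan with two acknowledged but unfilled holes, one of which changes the truth value of the statement.
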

\begin{proof}
For the Dunkl case, see R\"osler \cite{Roesler}.
For the Opdam-Cherednik case, see Schapira \cite[p.26, proof of Theorem 5.2]{Schapira}.  
Weak parabolic minimum principle holds in these cases
for unbounded domains since the heat kernel vanishes at infinity, cf. Corollary \ref{cor:heat_infty}.
For general parabolic minimum principles for unbounded domains see DiBenedetto \cite[pp.{} 142-146]{Diben}.
\end{proof}

\subsection{Estimates of  the Legendre function $\phi_0$}

We will use the following properties
of the  Legendre function $\phi_0$. 
An analogous   result for $e^\rho\,\phi_0$ in place of $\delta^{1/2} \phi_0$
is proven  in \cite[Theorem 3.3]{Schapira}.
However,   $e^\rho\not=\delta^{1/2}$ although they are
equivalent at infinity.

\begin{lemma}\label{G}
We consider the Legendre function $\phi_0$
in the rank one $A_1$ case.  Let 
\begin{align*}
G(r)=r\,\frac{d~}{dr}\,\log\left(\delta^{1/2}\,\phi_0\right)(r).
\end{align*}

Then $G(0)=k$, $G(r)=1+K(r)/(1+r)$ where $|K(r)|\leq K_0$, $r\geq 0$,  for a constant $K_0>0$ and $G(r)>0$ for all $r> 0$.  In addition, there exists $K>0$ such that $|\phi_0'(r)|\leq K$ for all $r$.
\end{lemma}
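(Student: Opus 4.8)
The plan is to work with the explicit $A_1$ data and reduce everything to the behavior of the classical Legendre/Jacobi function. In the $A_1$ case with multiplicity $k$, we have (up to normalizing constants) $\rho = k$, $\delta(r)=(2\sinh(r/2))^{2k}$ is the density $\omega_k$, and $\phi_0$ is the Heckman--Opdam hypergeometric function at $\lambda=0$, which in rank one is a Jacobi function ${}_2F_1$-type solution of the hypergeometric ODE. The first step is to recall the radial part of the Laplacian: $\delta^{1/2}\phi_0$ satisfies a Schr\"odinger-type equation $-(\delta^{1/2}\phi_0)'' + q(r)\,\delta^{1/2}\phi_0 = 0$ where $q(r)$ comes from the potential associated to $\delta$; concretely $q(r) = \tfrac14\big((\delta^{1/2})''/\delta^{1/2}\big)\cdot 4 - k^2$ is a smooth function on $[0,\infty)$ with $q(r)\to 0$ at infinity and $q(r)\sim (k^2-k/4\cdot\text{something})/r^2$-type singular behavior controlled near $0$ (the key point being that $\delta^{1/2}\phi_0$ extends smoothly and is bounded near $r=0$, with $\phi_0(0)=1$).

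The core computation is $G(r) = r\,\frac{(\delta^{1/2}\phi_0)'}{\delta^{1/2}\phi_0}(r)$. For the value $G(0)=k$: near $r=0$, $\delta^{1/2}(r) = (2\sinh(r/2))^{k} \sim r^{k}$, so $\log\delta^{1/2} = k\log r + (\text{smooth, vanishing at }0)$, giving $r\,\frac{d}{dr}\log\delta^{1/2} \to k$; and since $\phi_0$ is real-analytic with $\phi_0(0)=1$, $\phi_0'(0)$ finite, the term $r\,\phi_0'/\phi_0 \to 0$. Hence $G(0)=k$. For the asymptotic $G(r) = 1 + K(r)/(1+r)$: at infinity $\delta^{1/2}(r)\asymp e^{kr/2}\cdot e^{kr/2} = e^{kr}$ wait — more carefully $\delta^{1/2}(r) = (2\sinh(r/2))^k = e^{kr/2}(1-e^{-r})^k$, so $\log\delta^{1/2} = \tfrac{kr}{2} + k\log(1-e^{-r})$; combined with the known asymptotics of $\phi_0$, namely $\phi_0(r)\asymp e^{-kr}(1+r)$ (the rank-one case of the Remark following Theorem~\ref{Main}), more precisely $\phi_0(r) = e^{-kr}(1+r)(c+o(1))$ with controlled derivative, we get $\log(\delta^{1/2}\phi_0)(r) = \tfrac{kr}{2} - kr + \log(1+r) + (\text{bounded, exponentially flattening correction}) = -\tfrac{kr}{2}+\log(1+r)+O(e^{-r})$. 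Then $r\frac{d}{dr}$ of this is $-\tfrac{kr}{2} + \tfrac{r}{1+r} + O(r e^{-r})$ — this does not match the claimed form, so I must instead use that $\delta^{1/2}\phi_0$ tends to a constant at infinity (the Harish-Chandra $c$-function normalization in the $\delta^{1/2}$-picture makes the potential $q$ integrable and $\delta^{1/2}\phi_0$ bounded with a nonzero limit). Taking that as the correct normalization, $\log(\delta^{1/2}\phi_0)$ behaves like $\log(1+r)+$ (a term whose $r\frac{d}{dr}$ is bounded and $O(1/(1+r))$-summable), yielding $G(r) = \frac{r}{1+r} + O(1/(1+r)) = 1 - \frac{1}{1+r} + O(1/(1+r)) = 1 + \frac{K(r)}{1+r}$ with $K$ bounded.

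For positivity $G(r)>0$ for $r>0$: I would argue that $\delta^{1/2}\phi_0$ is log-concave or, more directly, that $(\delta^{1/2}\phi_0)'>0$ on $(0,\infty)$. This follows from the Schr\"odinger equation: writing $u=\delta^{1/2}\phi_0$, we have $u''=q\,u$ with $u>0$; near $0$, $u(r)\sim c\,r^{k}\to 0$ with $u'>0$, and $u\to$ positive constant at $\infty$ with $u'\to 0$; if $u'$ vanished at some interior point $r_0$ it would be a local extremum, and at a local max $u''\le 0$ forcing $q(r_0)\le 0$, while at a local min $q(r_0)\ge 0$ — so I need sign information on $q$. The cleanest route is to invoke the Sturm-type argument that $u$ is increasing because it is a positive solution that is ``small'' at one end and ``large'' at the other and $q$ does not oscillate, or alternatively to use the integral representation of $\phi_0$ and differentiate under the integral sign to see $(\delta^{1/2}\phi_0)'>0$ directly. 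Finally, the bound $|\phi_0'(r)|\le K$: since $\phi_0' $ is continuous on $[0,\infty)$, it suffices to control it at infinity, where $\phi_0(r)\asymp e^{-kr}(1+r)$ and its derivative is $O(e^{-kr}(1+r))\to 0$, so $\phi_0'$ is bounded.

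The main obstacle will be the positivity claim $G(r)>0$, equivalently $(\delta^{1/2}\phi_0)'>0$ on $(0,\infty)$: the boundary-value behavior alone (vanishing at $0$, positive limit at $\infty$) guarantees $u'>0$ somewhere but not everywhere, so I need either a genuine Sturm-oscillation/convexity argument using the sign of the potential $q$, or an explicit/integral-representation argument for monotonicity of $\delta^{1/2}\phi_0$. I expect the paper handles this via the explicit hypergeometric form of $\phi_0$ in rank one (reducing to a classical monotonicity property of associated Legendre functions) rather than via abstract ODE comparison.
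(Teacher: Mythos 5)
Two issues prevent this from being a complete proof, one a normalization error and one a genuine gap.

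\textbf{Normalization of $\delta$.} You take $\delta(r)=(2\sinh(r/2))^{2k}$ (the weight $\omega_k$), but in this paper $\delta=\delta_{\Op}(r)=\sinh^{2k}r$, so $\delta^{1/2}=\sinh^k r$. Near $0$ both behave like $r^k$, so your computation of $G(0)=k$ survives, but at infinity $\sinh^k r\sim 2^{-k}e^{kr}$ while $(2\sinh(r/2))^k\sim e^{kr/2}$, which is exactly why your first asymptotic attempt ``does not match.'' Your correction then asserts that $\delta^{1/2}\phi_0$ tends to a nonzero constant at infinity; this is false (and in fact contradicts the $\log(1+r)$ you invoke in the very next clause). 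With the right $\delta$ one has $\phi_0(r)\asymp e^{-kr}(1+r)$ and $\delta^{1/2}(r)\asymp e^{kr}$, so $\delta^{1/2}\phi_0\asymp 1+r$ grows linearly; then $\log(\delta^{1/2}\phi_0)=\log(1+r)+O(1)$ and $G(r)\to 1$ with a correction of size $O(1/(1+r))$, which is the claimed form. (The paper itself simply cites Schapira's Theorem~3.3, which gives the analogous statement for $e^{\rho}\phi_0$, noting that $e^{\rho}$ and $\delta^{1/2}$ are equivalent at infinity, so your sketch here is of comparable rigor once the normalization is fixed.)

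\textbf{Positivity $G>0$.} You correctly flag this as the main obstacle and do not close it. Your Schr\"odinger-equation route $u''=w_{\Op}u$ with $u=\delta^{1/2}\phi_0$ and $w_{\Op}=k^2+(k^2-k)/\sinh^2 r$ works cleanly only for $k\geq 1$, where $w_{\Op}\geq 0$ makes $u$ convex and, together with $u(0)=0$, $u>0$, forces $u'>0$. For $0<k<1$ the potential is negative near $0$ (indeed $w_{\Op}\to-\infty$ as $r\to 0^+$), so convexity fails and the boundary data alone do not force monotonicity; your ``Sturm-type argument'' is not supplied. This is a real gap. The paper proves $G>0$ uniformly in $k>0$ via the integral representation
$\phi_0(r)=\frac{\Gamma(2k)}{\Gamma(k)^2}\int_0^1\left(e^r\beta+e^{-r}(1-\beta)\right)^{-k}(\beta(1-\beta))^{k-1}\,d\beta$,
differentiating $\sinh^k r\,\phi_0(r)$ under the integral sign; the identity $\cosh r\,\left(e^r\beta+e^{-r}(1-\beta)\right)-\sinh r\,\left(e^r\beta-e^{-r}(1-\beta)\right)=1$ gives
$\frac{d}{dr}\left(\sinh^k r\,(e^r\beta+e^{-r}(1-\beta))^{-k}\right)=k\,\sinh^{k-1}r\,(e^r\beta+e^{-r}(1-\beta))^{-k-1}>0$,
hence $(\delta^{1/2}\phi_0)'>0$ for all $r>0$. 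You guessed this is where the paper goes; you need to actually carry it out (or supply a comparison/Sturm argument that handles $0<k<1$) for the proof to be complete.
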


\begin{proof}
The fact that $G(r)=1+K(r)/(1+r)$ with $K(r)$ bounded
is a consequence of \cite[Theorem 3.3]{Schapira}.
We have (refer to \cite{Sawyer})
\begin{align*}
\phi_0(r)&=\frac{\Gamma  \left(2 k \right)}{\Gamma  \left(k \right)^{2}}\, \sinh^{1-2 k}r\, \int_{-\frac{r}{2}}^{\frac{r}{2}}\left(\sinh  \left(\frac{r}{2}-u \right) \sinh  \left(u +\frac{r}{2}\right)\right)^{k -1}d u \\
&=\frac{\Gamma  \left(2 k \right)}{\Gamma  \left(k \right)^{2}}\,\int_0^1\,(e^r\,\beta+e^{-r}\,(1-\beta))^{-k}\,(\beta\,(1-\beta))^{k-1}\,d\beta
\end{align*}
using the change of variable $\beta=(e^{2\,u}-e^{-r})/(e^r-e^{-r})$.
Since  
\begin{align*}
G(r)&=r\,\frac{\frac{d~}{dr}\,\left(\delta^{1/2}\,\phi_0\right)(r)}{\delta^{1/2}(r)\,\phi_0(r)}
&=r\,\frac{\frac{d~}{dr}\,\left(\sinh^kr\,\phi_0(r)\right)}{\sinh^kr\,\phi_0(r)}
&=\frac{r}{\sinh r}\,\frac{k\,\cosh r\,\phi_0(r)+\sinh r\,\phi_0'(r)}{\phi_0(r)}
\end{align*}
the fact that $G(0)=k$ stems from the last equality, 
while the positivity $G(r)>0$ for all $r>0$ stems from the fact that
for $r>0$ we have
\begin{align*}
\frac{d~}{dr}\,\left(\sinh^k r\,\phi_0\right)(r)
&=\frac{d~}{dr}\,\sinh^kr\,
\left(\frac{\Gamma  \left(2 k \right)}{\Gamma  \left(k \right)^{2}}
\,\int_0^1\,(e^r\,\beta+e^{-r}\,(1-\beta))^{-k}\,(\beta\,(1-\beta))^{k-1}\,d\beta\right)\\
&=\frac{\Gamma  \left(2 k \right)}{\Gamma  \left(k \right)^{2}}\,\int_0^1\,\frac{d~}{dr}\,\left(\sinh^kr\,(e^r\,\beta+e^{-r}\,(1-\beta))^{-k}\right)\,(\beta\,(1-\beta))^{k-1}\,d\beta\\
&=\frac{\Gamma  \left(2 k \right)}{\Gamma  \left(k \right)^{2}}\,\int_0^1\,(e^r\,\beta+e^{-r}\,(1-\beta))^{-k-1}\,\sinh^{k-1}r
\,(\beta\,(1-\beta))^{k-1}\,d\beta>0.
\end{align*}

Since
\begin{align*}
\phi_0'(r)&=\frac{\phi_0(r) \left(G(r) \sinh  r\,- k r\,\cosh  r  \right)}{r\,\sinh  r},
\end{align*}
the rest follows.
\end{proof}

We give here a crude estimate for the $W$-invariant Opdam-Cherednik heat kernel.
\begin{lemma}\label{crude}
\begin{align*}
h_{\Op}(X,Y,t)\leq  \phi_0(X)\,\phi_0(-Y)\,\int_{\a}\,e^{-(|\lambda|^2+|\rho|^2)\,t}\,d\nu'(\lambda).
\end{align*}
\end{lemma}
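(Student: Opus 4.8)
The plan is to derive this bound directly from the Plancherel-type formula \eqref{Plancherel} for $h_{\Op}(X,Y,t)$, the only real input being a pointwise bound on the Heckman--Opdam hypergeometric functions $\phi_{i\lambda}$. First I would recall that for $\lambda\in\a^*$ real, $G_k(X,i\lambda)$ and hence $\phi_{i\lambda}(X)$ satisfies $|\phi_{i\lambda}(X)|\le\phi_0(X)$; this is the standard Harish-Chandra/Opdam bound (it follows, e.g., from the integral/convexity representation of $\phi_\lambda$ or from \cite{Opdam,Schapira}), valid for all positive multiplicities. The same bound applied at $-Y$ gives $|\phi_{i\lambda}(-Y)|\le\phi_0(-Y)$.

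Next, starting from
\begin{align*}
h_{\Op}(X,Y,t)= \int_\a e^{ -(|\lambda|^2+|\rho|^2)\,t}
\phi_{i\lambda}(X)\,\phi_{i\lambda}(-Y)\, d\nu'(\lambda),
\end{align*}
I would note that the integrand is in fact nonnegative --- $h_{\Op}$ is a genuine heat kernel, and one may also see positivity of $\phi_{i\lambda}(X)\phi_{i\lambda}(-Y)$ for real $\lambda$ after symmetrization --- so there is no loss in taking absolute values inside. Pulling the ($\lambda$-independent) factors $\phi_0(X)$ and $\phi_0(-Y)$ out of the integral then yields
\begin{align*}
h_{\Op}(X,Y,t)\le \phi_0(X)\,\phi_0(-Y)\int_\a e^{-(|\lambda|^2+|\rho|^2)\,t}\,d\nu'(\lambda),
\end{align*}
which is exactly the asserted inequality. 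The finiteness of the remaining integral is guaranteed by the polynomial growth of the Plancherel density $\nu'$ together with the Gaussian factor $e^{-|\lambda|^2 t}$, as already invoked in the excerpt for the convergence of the defining integrals.

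I expect no serious obstacle here: the statement is a soft consequence of the Plancherel formula and the elementary bound $|\phi_{i\lambda}|\le\phi_0$. The only point requiring a word of care is the interchange of absolute value and integration, i.e.\ justifying that $\phi_{i\lambda}(X)\phi_{i\lambda}(-Y)\ge 0$ (or at least that $|\phi_{i\lambda}(X)\phi_{i\lambda}(-Y)|\le\phi_0(X)\phi_0(-Y)$ suffices, which it does since $\phi_0>0$); both are immediate from the known properties of the hypergeometric functions, so the proof is essentially a two-line estimate.
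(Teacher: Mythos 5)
Your proof is correct and follows essentially the same route as the paper: take absolute values under the Plancherel-type integral \eqref{Plancherel} and invoke the standard bound $|\phi_{i\lambda}(X)|\le\phi_0(X)$ for real $\lambda$. The detour about pointwise nonnegativity of the integrand is unnecessary (and not immediate from positivity of $h_{\Op}$), but as you note yourself, the absolute-value estimate already suffices, so the argument stands.
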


\begin{proof}  By \eqref{Plancherel} we have
$
h_{\Op}(X,Y,t)
\leq  \displaystyle\int_{\a}\,e^{-(|\lambda|^2+|\rho|^2)\,t}\,|\phi_{i\lambda}(X)|\,|\phi_{i\lambda}(-Y)|
\,d\nu'(\lambda).
$
\end{proof}
\begin{corollary}\label{cor:heat_infty}
For each fixed $Y\in\a^+$ and $t>0$, we have  $
\lim_{|X|\to \infty} h_{\Op}(X,Y,t)=0$.
Moreover, $ h_{\Op}(X,Y,t)=O(e^{-|\rho|^2\,t})$ independently of $X$ and $Y$.
\end{corollary}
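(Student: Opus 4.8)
The plan is to derive both assertions directly from the crude bound in Lemma \ref{crude} together with the known behaviour of the Legendre function $\phi_0$. First I would invoke Lemma \ref{crude}, which gives
\begin{align*}
h_{\Op}(X,Y,t)\leq \phi_0(X)\,\phi_0(-Y)\int_{\a} e^{-(|\lambda|^2+|\rho|^2)t}\,d\nu'(\lambda).
\end{align*}
The integral on the right is a finite quantity for every $t>0$: the estimates of the Plancherel density $\nu'$ recalled from \cite{Schapira} ensure polynomial growth of the density, so convergence of $\int_\a e^{-(|\lambda|^2+|\rho|^2)t}\,d\nu'(\lambda)$ is immediate, and pulling out the factor $e^{-|\rho|^2 t}$ shows it equals $e^{-|\rho|^2 t}\,C(t)$ with $C(t)=\int_\a e^{-|\lambda|^2 t}\,d\nu'(\lambda)<\infty$. (Note $\phi_0$ is $W$-invariant, so $\phi_0(-Y)=\phi_0(Y)$.)

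For the first assertion, fix $Y\in\a^+$ and $t>0$. Then $\phi_0(Y)$ and the integral are fixed finite constants, so it suffices to show $\phi_0(X)\to 0$ as $|X|\to\infty$. This follows from the sharp estimate for $\phi_0$ quoted in the remark after Theorem \ref{Main}, namely $\phi_0(X)\asymp e^{-\rho(X)}\prod_{\alpha\in\Sigma^+}(1+\alpha(X))$ (\cite[Theorem 3.1]{Schapira}): as $|X|\to\infty$ inside $\a^+$ the exponential decay $e^{-\rho(X)}$ dominates the polynomial factor, giving $\phi_0(X)\to 0$; and by $W$-invariance the same holds for $|X|\to\infty$ in all of $\a$. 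Hence $h_{\Op}(X,Y,t)\to 0$.

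For the second assertion, I would use that $\phi_0$ is bounded by $\phi_0(0)=1$ on all of $\a$ (the hypergeometric function $\phi_\lambda$ with $\lambda=0$ attains its maximum at the origin; in the $A_1$ case this is also visible from the integral representation in Lemma \ref{G}, where the integrand is maximised at $r=0$). Therefore $\phi_0(X)\phi_0(-Y)\leq 1$, and Lemma \ref{crude} yields $h_{\Op}(X,Y,t)\leq e^{-|\rho|^2 t}\,C(t)$ with $C(t)$ as above, uniformly in $X,Y$. Since $C(t)$ is continuous and finite for $t$ in any compact subinterval of $(0,\infty)$, this is the claimed $O(e^{-|\rho|^2 t})$ bound, independent of $X$ and $Y$. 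The only point requiring a little care is justifying finiteness of the $\lambda$-integral from the Plancherel density estimates of \cite{Schapira}; everything else is a direct application of the two preceding lemmas and the $\phi_0$-asymptotics.
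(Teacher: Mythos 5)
Your proof is correct and takes the approach the paper implicitly intends: the Corollary is stated immediately after Lemma \ref{crude} without a written proof, precisely because both claims follow by factoring the crude upper bound as $\phi_0(X)\,\phi_0(-Y)\cdot e^{-|\rho|^2 t}\int_\a e^{-|\lambda|^2 t}\,d\nu'(\lambda)$ and then using, respectively, the decay $\phi_0(X)\to 0$ from the sharp $\phi_0$-estimate and the boundedness of $\phi_0$ together with finiteness of the Plancherel integral. Your two small side remarks are also sound: $\phi_0(-Y)=\phi_0(Y)$ (e.g.\ from $\phi_\lambda(-X)=\phi_{-\lambda}(X)$ and $W$-invariance), and the passage from $|X|\to\infty$ in $\a^+$ to all of $\a$ via $W$-invariance is legitimate since $W$ acts by isometries; and even if one only invokes boundedness of $\phi_0$ (guaranteed by the quoted estimate) rather than the sharper $\phi_0\le 1$, the second assertion still follows.
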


\subsection{A covering lemma}\label{sec:cover}

\begin{lemma}\label{cover}
 Let $S_a=\{t\colon s^a/2\leq t\leq 2\,s^a\}\subset\{t\colon s^a/4\leq t\leq 2\,s^a\}=T_a$.  Suppose $1\leq s\leq t\leq s^2$.  If $
 s\in (4^{2^{N-1} }, 4^{2^N} ]$ then
\begin{align*}
t\in \bigcup_{r=2^N}^{2^{N+1}}\,S_{r/2^N}.
\end{align*}
For $N$ fixed and $1\leq s\leq t\leq s^2$, $t$ will belong to no more than three of $S_{r/2^N}\cup T_{(r+1)/2^N}=[s^{r/2^N}/2,2\,s^{(r+1)/2^N}]$, $2^N\leq r\leq 2^{N+1}$.
\end{lemma}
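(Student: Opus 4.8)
The plan is to verify the two assertions of the lemma by elementary manipulation of exponents, thinking of everything on a logarithmic scale. Write $L=\log_4 s$, so the hypothesis $s\in(4^{2^{N-1}},4^{2^N}]$ becomes $2^{N-1}<L\le 2^N$, and the hypothesis $1\le s\le t\le s^2$ becomes $0\le L$ (already known) and $\log_4 t\in[L,2L]$. The set $S_a=[s^a/2,2\,s^a]$ is, on the $\log_4$ scale, the interval $[aL-\tfrac12,aL+\tfrac12]$. Thus $t\in S_a$ exactly when $|\log_4 t-aL|\le\tfrac12$, i.e.\ when $a$ lies within $1/(2L)$ of $\log_4 t/L$. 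So to prove the covering statement I must show that as $r$ ranges over the integers from $2^N$ to $2^{N+1}$, the points $a=r/2^N$ — which are equally spaced with gap $1/2^N$ — come within $1/(2L)$ of the target value $\theta:=\log_4 t/L\in[1,2]$. Since $L\le 2^N$ we have gap $=1/2^N\le 1/L\le 1/(2L)\cdot 2$; more precisely the nearest grid point $a=r/2^N$ to $\theta$ satisfies $|a-\theta|\le 1/2^{N+1}\le 1/(2L)$ because $2^{N}\ge L$ forces $1/2^{N+1}\le 1/(2L)$. Hence $|aL-\log_4 t|=L|a-\theta|\le\tfrac12$, so $t\in S_{r/2^N}$ for that $r$, and since $\theta\in[1,2]$ the corresponding $r$ lies in $\{2^N,\dots,2^{N+1}\}$. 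This gives $t\in\bigcup_{r=2^N}^{2^{N+1}}S_{r/2^N}$.

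For the second assertion I count how many of the intervals $S_{r/2^N}\cup T_{(r+1)/2^N}$ can contain a fixed $t$. On the $\log_4$ scale, $T_b=[bL-1,bL+\tfrac12]$, so $S_{r/2^N}\cup T_{(r+1)/2^N}=[\tfrac{r}{2^N}L-\tfrac12,\ \tfrac{r+1}{2^N}L+\tfrac12]$, an interval of length $\tfrac{L}{2^N}+1\le 2$ (using $L\le 2^N$). As $r$ increases by $1$, the left endpoint $\tfrac{r}{2^N}L-\tfrac12$ increases by exactly $\tfrac{L}{2^N}$. A fixed point $\log_4 t$ lies in such an interval $[\tfrac{r}{2^N}L-\tfrac12,\tfrac{r}{2^N}L-\tfrac12+(\tfrac{L}{2^N}+1)]$ iff $\tfrac{r}{2^N}L-\tfrac12$ lies in a window of length $\tfrac{L}{2^N}+1$ below $\log_4 t$; the number of integers $r$ with $\tfrac{r}{2^N}L-\tfrac12$ in an interval of length $\tfrac{L}{2^N}+1$ is at most $\big(\tfrac{L}{2^N}+1\big)\big/\tfrac{L}{2^N}+1 = 1+\tfrac{2^N}{L}+1$. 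This bound is not yet $3$; to get the sharp count of three I would instead note directly that consecutive left endpoints are spaced by $\tfrac{L}{2^N}$ while each interval has length $\le\tfrac{L}{2^N}+1\le 2\cdot\tfrac{L}{2^N}+\tfrac{L}{2^N}$ is too weak, so the clean argument is: since $\tfrac{L}{2^N}\le 1$, an interval of length $\tfrac{L}{2^N}+1\le 2$ whose family has step $\tfrac{L}{2^N}$ can meet a given point for at most $\lceil 2/\tfrac{L}{2^N}\rceil$ values — unbounded if $L/2^N\to 0$ — so the genuine mechanism must use both inequalities $2^{N-1}<L$ and $L\le 2^N$ together, giving $\tfrac12<\tfrac{L}{2^N}\le 1$, whence each interval has length $\le 2$ and the step is $>\tfrac12$, so at most $2/\tfrac12+1=5$...

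The main obstacle is clearly getting the constant down to exactly three rather than merely "O(1)": the counting above only needs the two-sided bound $\tfrac12<L/2^N\le 1$, and with that, an interval of length $L/2^N+1\le 2$ together with a grid of step $L/2^N>\tfrac12$ can contain at most $\lfloor 2/(L/2^N)\rfloor+1\le \lfloor 4\rfloor+1$ points in the crude estimate, so to reach three one must argue more carefully — e.g.\ observe that $S_{r/2^N}\cup T_{(r+1)/2^N}$ actually has length $L/2^N+1$ with $L/2^N\in(\tfrac12,1]$, so length in $(\tfrac32,2]$, and three consecutive such intervals already have left endpoints spanning $2L/2^N\in(1,2]$, so the fourth interval's left endpoint $\tfrac{r+3}{2^N}L-\tfrac12$ exceeds $\log_4 t$ once $\log_4 t$ is in the first interval. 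Concretely: if $\log_4 t\in[\tfrac{r}{2^N}L-\tfrac12,\tfrac{r+1}{2^N}L+\tfrac12]$ then $\log_4 t\ge \tfrac{r}{2^N}L-\tfrac12$, while for the interval indexed $r+3$ the left endpoint is $\tfrac{r+3}{2^N}L-\tfrac12=\log_4 t+(\text{something}\ge 3L/2^N-(L/2^N+1))=\log_4 t+(2L/2^N-1)>\log_4 t$ since $2L/2^N>1$; so $t$ misses interval $r+3$, and symmetrically interval $r-1$, leaving the three indices $r,r+1,r+2$ — wait, that is three \emph{possible} hits among $r-1,r,r+1,r+2$ minus the endpoints, which I would nail down with the precise endpoint inequalities in the final write-up. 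I expect this endpoint bookkeeping, not any conceptual difficulty, to be where the real work lies.
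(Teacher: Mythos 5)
Your proof of the first assertion is correct, and is essentially the paper's argument read on a logarithmic scale: the key fact is the inequality $s^{1/2^N}\le 4$, which you phrase as ``grid spacing $1/2^N$ puts some $r/2^N$ within $1/(2L)$ of the target,'' while the paper phrases it directly as ``the left endpoint of $S_{r/2^N}$ does not exceed the right endpoint of $S_{(r-1)/2^N}$, so consecutive $S_{r/2^N}$ overlap and their union is all of $[s/2,2s^2]$.'' Either is fine.

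The second assertion, however, is not actually proved by what you wrote, and the gap is a missing idea rather than routine endpoint bookkeeping. Your computation correctly shows that if $\log_4 t$ lies in the interval indexed by $r$ (namely $[s^{r/2^N}/2,\,2s^{(r+1)/2^N}]$) then it misses the interval indexed by $r+3$, since the left endpoint of the latter exceeds the right endpoint of the former by $2L/2^N-1>0$. The genuinely symmetric consequence, obtained by comparing the left endpoint of interval $r$ with the right endpoint of interval $r-3$, is that $t$ also misses interval $r-3$ --- not $r-1$ as you wrote. That only confines the hit indices to $\{r-2,\dots,r+2\}$, a set of five, not three. The paper closes this by applying the \emph{same} inequality to an \emph{arbitrary} pair $u\le v$ of hit indices: containment of $t$ in both intervals forces the right endpoint of interval $u$ to be $\ge$ the left endpoint of interval $v$, i.e.\ $4\ge s^{(v-u-1)/2^N}$, and since $s>4^{2^{N-1}}$ this forces $v-u\le 2$. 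Hence the set of hit indices has diameter $\le 2$ and therefore cardinality $\le 3$. Equivalently, you could run your ``misses $r+3$'' argument with $r$ chosen to be the \emph{smallest} hit index, so that the left side is excluded by minimality rather than by a symmetric estimate; without one of these two observations the bound stays at five.
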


\begin{proof}
Note that $s\leq 4^{2^N}$ is equivalent to $s^{r/2^N}/2\leq 2\cdot s^{(r-1)/2^N}$.
This means that consecutive intervals $S_{r/2^N}$ intersect 2 by 2, so that the union $\bigcup_{r=2^N}^{2^{N+1}}\,S_{r/2^N}$ is an interval containing 
$S_1$ and $S_2$ and, in particular, $s/2$ and $2\,s^2$. 

On the other hand, suppose $t\in(S_{u/2^N}\cup T_{(u+1)/2^N})\cap(S_{v/2^N}\cup T_{(v+1)/2^N})$ with $u\leq v$. That means that $2\,s^{(u+1)/2^N}\geq s^{v/2^N}/2$.
This is equivalent to $4\geq s^{(v-u-1)/2^N}$. If $v-u-1\geq 2$ then  $4\geq s^{2/2^N}>(4^{2^{N-1}} )^{1/2^{N-1}}=4$ when $s\in (4^{2^{N-1} }, 4^{2^N} ]$ which is absurd.  We must have $-1\leq v-u-1\leq 1$ \emph{i.e.} $v=u$, $v=u+1$ or $v=u+2$.
The result follows.
\end{proof}

\section{Proof of Theorem \ref{Main} }

For the root system $A_1$, we have $k(\alpha)=k$ and $k(2\,\alpha)=0$.
From now on, we use the one-dimensional notation $r$, $s\in \R^+=[0,\infty[$ in place of $X,Y$.
We have $D^W_{\Op}=\frac{\partial~}{\partial t}-L^W_{\Op}$, where 
\begin{align*}
L^W_{\Op}=\frac{d^2~}{dr^2}+2\,k\,\coth r\,\frac{d~}{dr}.
\end{align*}
In the sequel, for simplicity, we omit the superscript $W$ for  radial operators, i.e. we write $D$ for $D^W$.\\

The proof is based on the parabolic minimum principle (Proposition \ref{Ao}), cf.  Anker and Ostellari \cite{AO}.

\subsection{Plan of the proof}
 By symmetry, we can assume $r\geq s$.   We will make this hypothesis throughout
the proof.  Denote $\R^{2+}=\{(r,s)\,|\, r\geq s\geq 0\}.$\\

 We cut the space $\R^{2+}\times\R^+$ into  regions:  a ``lower'' Region 0
 (i.e. $t\le M$) and, for $t\ge M$, the ``upper'' sub-regions A, B, C, D.
 The region D will be divided into further subregions. Call $U$ the
 union of  the "upper" sub-regions A,B,C,D.
 
 The parabolic minimum principle (Proposition \ref{Ao})
will be applied two times:
\begin{itemize}
\item[(i)] on the lower Region 0

\item[(ii)] on the upper Region  $U$ equal to the union of  subregions A,B,C,D.\\
\end{itemize}

 Denote by $E(r,s,t)$ the right-hand side of the estimate formula \eqref{estim:conjecture}
from Theorem \ref{Main}:
$$
E(r,s,t) \eqdef t^{-\frac{1}{2}} \,e^{\frac{-|r-s|^2}{4t}-k^2 t-k\,(r+s)}
\,(1+r)\,(1+s) \,\frac{(t+1+r+s)^{k-1}}{(t+r\,s)^k}
$$

In order to apply the parabolic minimum principle,  we will construct auxiliary functions $h_{{\mathcal R},c}$ for each
of two regions ${\mathcal R}$ (equal to Region 0 or $U$) and real $c$, such that 
$h_{{\mathcal R},c}\asymp E$ on ${\mathcal R}$ and, for well chosen $c$, we have $D_{\Op}h_{{\mathcal R},c}\ge 0$  on ${\mathcal R}$
or $D_{\Op}h_{{\mathcal R},c}\le 0$  on ${\mathcal R}$.

The parabolic minimum principle will be applied to the function $u=h_{{\mathcal R},c}-
h_{\Op}$. The conclusive inequality  will give 
one inequality of the conjectured  estimate $h_{{\mathcal R},c}\asymp
h_{\Op}$.\\

On the lower Region 0, the kernel $h_{\Du}$ will be used to define a smooth
function $h_{0,c}$
such that $h_{0,c}\asymp E$ on  Region 0
and $D_{\Op} h_{0,c}\ge 0$
for $c$ big enough. To finish, we will interchange the roles of   $h_{\Du}$ and $h_{\Op}$.\\

On every upper subregion ${\mathcal S}\subset U$ and for every
 $c\in\R$,
 we will construct a smooth function $h_{{\mathcal S},c}$ such that
 $
 h_{{\mathcal S},c}\asymp E$ on $ {\mathcal S}$  and
there exist $M_1$, $M_2>0$  such that
\begin{eqnarray*}
 D_{\Op} h_{{\mathcal S},c}(r,s,t) \ge 0\qquad \text{ for\ all}\ c\ge M_1 \\
  D_{\Op} h_{{\mathcal S},c}(r,s,t) \le 0\qquad \text{ for\ all}\ c\le -M_2. 
\end{eqnarray*}
or the other way round.
Within the Region $D$, when $s^2>t$, we will
additionally consider subregions
$\Sigma$ with respect to  the variables $s$ and $t$.  However, the constants in the
estimate $
 h_{{\Sigma},c}\asymp E$ on $ \Sigma $ 
 will not depend on $\Sigma$ nor on $c\in\R$.

In order to apply the parabolic minimum principle on the upper Region $U$, we will construct a function $h_{U,c}$
 from the auxiliary functions
 by "gluing" them with a smooth partition of unity
 $(w_{\mathcal S})$
$$
h_{U,c}=\sum_{\mathcal S} w_{\mathcal S}h_{{\mathcal S},c}
 $$
 In practice we will always consider the constant $c>0$
 and multiply it by the sign $\mp$ denoted $\S$.
 We will skip $c$ from the notation 
 $h_{{\mathcal S},c}$ and write 
 $h_{\mathcal S}$.
 
Some more involved computations of the derivatives were performed with Maple.

\subsection{Subdivision into regions}
~\\

In what follows, $K_0$ corresponds to the constant in Lemma \ref{G}.
We will consider five regions (see Figure \ref{overlap}):

\begin{itemize}
\item[Region 0:\,] $0<t\leq M$  for a fixed $M>0$,

\item[Region A:] $0<t\leq 2\,r$,

\item[Region B:] $t \geq T= (R^2+1)^{3/2}$, $t\geq r\geq s$ and  $r\leq R$ for a fixed $R>0$,

\item[Region C:] $t \geq 1$, $t\geq r\geq s$ and  $t\geq r\,s/2$, $r\geq R_0$ where $R_0$ is given later in \eqref{def:R0},

\item[Region D:]  $t \geq 1$, $s\leq r\leq t\leq rs$, $r\geq R_0$.
\end{itemize}

\setlength{\unitlength}{0.5cm}

\begin{figure}[htbp]
\begin{picture}(10,10)(1,0)
\put(4.5,1.3){\fbox{\tiny 0}}
\put(6.5,4.5){\fbox{\tiny A}}
\put(1.1,7){\fbox{\tiny B}}
\put(3.3,9.0){\fbox{\tiny C}}
\put(5.0,7.0){\fbox{\tiny D}}

\put(0,0){\vector(1,0){10}}
\put(0,0){\vector(0,1){10}}

\put(0,3){\line(1,0){10}}

\put(3,3){\line(0,1){7}}
\put(3,3){\line(1,1){7}}
\put(3,6){\line(1,2){2}}

\put(-0.5,9.5){$\scriptstyle t$}
\put(9.7,-0.5){$\scriptstyle r$}
\put(10.0,9.5){$\scriptstyle t=r$}
\put(5.0,9.4){$\scriptstyle t=2\,r$}
\put(-0.7,2.8){$\scriptstyle M$}
\put(2.7,-0.7){$\scriptstyle R$}
\put(3.0,-0.2){\line(0,1){0.4}}
\end{picture}
\caption{Sketch of the regions without overlap\label{overlap}}
\end{figure}
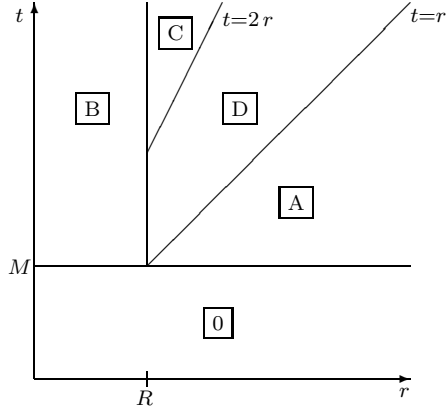

By choosing appropriate parameters, it can be seen that these regions cover all cases.\\

\bigskip

\subsection{Region 0:  $0<t\leq M$ for a fixed $M>0$}
~\\

Define $\delta_{\Op}(r)=\sinh^{2\,k}r$ and $\delta_{\Du}(r)=r^{2\,k}$.  In Region 0, the estimate {\eqref{estim:conjecture}
from Theorem \ref{Main}} is equivalent to 
\begin{align*}
h_{\Op}(r,s,t)&\asymp t^{-\frac{d}{2}} \,e^{\frac{-|r-s|^2}{4t}}\,e^{-k\,(r+s)} \,(1+r)\,(1+s)\,\frac{(1+r)^{k-1}}{(t+r\,s)^{k}}\\
&\asymp e^{-k^2\,t} \, e^{-k\,s} \,(1+s)\,\frac{\delta_{\Op}^{-1/2}(r)}{\delta_{\Du}^{-1/2}(r)} \,h_{\Du}(r,s,t)
\ {\eqdef}\
h_0(r,s,t)
\end{align*}
where  $h_{\Du}(r,s,t)$ is the Dunkl heat kernel in three variables.

The {modified} radial Laplacians for the 
Opdam-Cherednik
and Dunkl {settings} are respectively
\begin{align*}
 L_{\Op}\,=\frac{\partial^2~}{\partial r^2}+2\,k\,\coth r\,\frac{\partial~}{\partial r}=\delta^{-1/2}\circ\frac{\partial^2~}{\partial r^2}\circ \delta^{1/2}-w_{\Op}\\
L_{\Du}=\frac{\partial^2~}{\partial r^2}+2\,k\,\frac{1}{r}\,\frac{\partial~}{\partial r}=\delta_{\Du}^{-1/2}\circ\frac{\partial^2~}{\partial r^2}\circ \delta_{\Du}^{1/2}-w_{\Du}\\
\end{align*}
where $w_{\Op}=k^2+\frac{k^2-k}{\sinh^2r}$ and $w_{\Du}=\frac{k^{2}-k}{r^{2}}$. Now,
\begin{align*}
\frac{D_{\Op}\,h_0}{h_0}&=\frac{D_{\Du}\,(e^{-k^2\,t} \,h_{\Du})}{e^{-k^2\,t} \,h_{\Du}}+w_{\Op}-w_{\Du}
=-(k^2-k)\,\left(\frac{1}{r^2}-\frac{1}{\sinh^2r}\right)+k^2-k^2\\
&=O\left(\frac{1}{(1+r)^2}\right)
\end{align*}
for all $r\geq 0$. For simplicity, we will write 
\begin{align}
\left|{\frac{D_{\Op}\,h_0}{h_0}}\right|\leq {H\over (1+r)^2}\label{boundA}
\end{align}
for some fixed $H>0$ independent of $r$, $s$, $t$.

We apply the parabolic minimum principle for $ (r,t)\in [0,\infty)\times [0,M]$ to the expression
\begin{align*}
u(r,s,t) {\ \eqdef\ } e^{c\,t/(1+t)}\,h_0(r,s,t)-h_{\Op}(r,s,t).
\end{align*}

Since
\begin{align*}
\frac{D_{\Op}u}{e^{c\,t/(1+t)}\,h_0}=\frac{c}{(1+t)^2}+\frac{D_{\Op}h_0}{h_0}\geq \frac{c}{(1+M)^2}-{H\over (1+r)^2}\geq0 \hbox{~provided $c\geq H\,(1+M)^2$},
\end{align*}
and
\begin{align*}
\left\lbrace
\begin{array}{cl}
u(r,s,0)&=\delta(r,s)-\delta(r,s)=0,\\ 
u(r,s,t)&=0\qquad\qquad\hbox{as $r\to\infty$},
\end{array}
\right.
\end{align*}
by the minimum principle, we have $u(r,s,t)\geq 0$ \emph{i.e.} $h_{\Op}(r,s,t)\leq e^{c\,t/(1+t)}\,h_0(r,s,t)$ on $ [0,\infty)\times [0,\infty)\times [0,M]$.  Thus we deduce that
\begin{align*}
h_{\Op}(r,s,t)&\leq e^{c}\,h_0(r,s,t)\\
\end{align*}
on $[0,\infty)\times[0,\infty)\times [0,{M}]$.

By interchanging the Dunkl and the 
Opdam-Cherednik roles, we obtain the converse estimate
\begin{align*}
h_{\Op}(r,s,t)\geq e^{c} \,h_0(r,s,t)
\end{align*}

\bigskip

\subsection{Upper regions A, B, C, D}

\noindent\paragraph{\bf Region A:  $0<t\leq 2\,r$}~\\

We define $h_A$ with the same expression as for $h_0$.  Hence, using \eqref{boundA}, we have
\begin{align*}
\left|\frac{D_{\Op}\,h_A}{h_A}\right|\leq H/(1+r)^2\leq H/(1+t/2)^2\leq 4\,H/t^2.
\end{align*}

So if we replace $h_A$ by $e^{-4\,\S\,H/t}\,h_A$, $D_{\Op}\,h_A$ will have the same sign as $\S$.

\bigskip

\noindent\paragraph{\bf Region B: $t \geq T= (R^2+1)^{3/2}$, $t\geq r\geq s$ and  $r\leq R$ for a fixed $R>0$}~\\

In that region, the estimate from Conjecture \ref{conjecture} is equivalent to 
\begin{align*}
t^{-3/2}\,e^{-k^{2} t} \phi_0 (r)\,
\asymp h_B^+
\eqdef
t^{-3/2}\,e^{-k^{2} t} \phi_0 (r)\,\,\overbrace{(1+\left(R^{2}+1\right)^{\frac{3}{2}}-\left(r^{2}+1\right)^{\frac{3}{2}})}^{Q(r)}
\end{align*}
for $0\leq r\leq R$.

Now, since $G(r)\geq 0$ by Lemma \ref{G} and $t\geq T$,
\begin{align*}
\frac{D_{\Op}\,h_B^+}{h_B^+}&=
{\frac{6 \sqrt{r^{2}+1}}{1+\left(R^{2}+1\right)^{\frac{3}{2}}-\left(r^{2}+1\right)^{\frac{3}{2}}}\,G(r)
+\overbrace{\frac{3 \left(2 r^{2}+1\right)}{\sqrt{r^{2}+1}\, \left(1+\left(R^{2}+1\right)^{\frac{3}{2}}-\left(r^{2}+1\right)^{\frac{3}{2}}\right)}}^{\hbox {increasing in $r$}}-\frac{3}{2 t}}\\
&\geq \frac{3}{\left(R^{2}+1\right)^{\frac{3}{2}}}-\frac{3}{2\, (R^2+1)^{3/2}}\geq 0.
\end{align*}

On the other hand, if
\begin{align*}
h_B^-
\eqdef
t^{-3/2}\,e^{-k^{2} t} \phi_0(r)
\end{align*}
then 
\begin{align*}
\frac{D_{\Op}\,h_B^-}{h_B^-}=
{-\frac{3}{2\,t}}\leq 0.
\end{align*}

From now on, we will assume \begin{equation}\label{def:R0}
R_0=\max\{1,4\,K_0-1\}.
\end{equation}

\bigskip

\noindent\paragraph{\bf Region C: $t \geq 1$, $t\geq r\geq s$ and  $t\geq r\,s/2$, $r\geq R_0$}~\\

In that region, the estimate from Conjecture \ref{conjecture} is equivalent to 
\begin{align*}
h_C^{\pm}
\eqdef
\frac{e^{-\frac{d r s}{t}} e^{\frac{\S  c \sqrt{r^{2}+1}}{t}} t^{-\frac{3}{2}+k} \phi_0  (r)\, \phi_0  \left(s \right) e^{-\frac{\left(r -s \right)^{2}}{4 t}} e^{-k^{2} t}}{t^{k}}.
\end{align*}

If we apply the operator $D_{\Op}$ and use Lemma \ref{G}, we obtain
\begin{align}
\frac{D_{\Op}\,h_C^{\pm}}{h_C^{\pm}}&=
{\left(-\frac{2 \S  c}{t \sqrt{r^{2}+1}}+\frac{1}{t}-\frac{s}{t r}+\frac{2 d s}{t r}\right) G +\frac{2 d s \S  c r}{t^{2} \sqrt{r^{2}+1}}
+\frac{\S  c \,r^{2}}{t \left(r^{2}+1\right)^{\frac{3}{2}}}-\frac{\S ^{2} c^{2} r^{2}}{t^{2} \left(r^{2}+1\right)}+\frac{d \,s^{2}}{t^{2}}}
\nonumber\\&
{-\frac{\S  c \sqrt{r^{2}+1}}{t^{2}}-\frac{\S  c}{t \sqrt{r^{2}+1}}-\frac{d^{2} s^{2}}{t^{2}}-\frac{\S  c r s}{t^{2} \sqrt{r^{2}+1}}-\frac{1}{t}+\frac{\S  c \,r^{2}}{t^{2} \sqrt{r^{2}+1}}} \nonumber\\
&=
{-\frac{\S ^{2} r^{2}}{t^{2} \left(r^{2}+1\right)} \,c^2
+\S \,\left(\frac{2 dr s}{\sqrt{r^{2}+1}\, t^{2}}-\frac{r^{3} s +2 r^{2} t +r^{2}+r s +3 t +1}{t^{2} \left(r^{2}+1\right)^{\frac{3}{2}}}\right)\,c\label{dgen}
-\frac{2 \S }{\left(1+r \right) \sqrt{r^{2}+1}\, t}\,K\,c}\\&
{+\frac{2 d s +r -s}{t r \left(1+r \right)}\,K
-\frac{d^{2} s^{2}}{t^{2}}+\frac{s \left(r s +2 t \right) d}{r \,t^{2}}-\frac{s}{t r}.}\nonumber
\end{align}

If we set $d=1/2$ and $\S =-1$ and using $r\geq 4\,K_0-1$ then \eqref{dgen} becomes, 
\begin{align*}
\frac{D_{\Op}\,h_C^+}{h_C^+}
&=
-\frac{ r^2}{t^{2} (r^2+1)} \,c^2
+\frac{2 r^{2} t +r^{2}+3 t +1}{t^{2} \left(r^{2}+1\right)^{\frac{3}{2}}}\,c
+\frac{2 }{\left(1+r \right) \sqrt{r^{2}+1}\, t}\,K\,c
+{\frac{1}{t  \left(1+r \right)}\,K}
+\frac{s^{2}}{4 t^{2}}
\\&\geq
-\frac{ 1}{t^{2} } \,c^2
+\frac{2}{t \left(r^{2}+1\right)^{\frac{1}{2}}}\,c
-\frac{2 }{\left(1+r \right) \sqrt{r^{2}+1}\, t}\,K_0\,c
-{\frac{1}{t  \left(1+r \right)}\,K_0}
+\frac{s^{2}}{4 t^{2}}\\
&\geq -\frac{ 1}{t^{2} } \,c^2
+\frac{2}{t \left(r^{2}+1\right)^{\frac{1}{2}}}\,c
-\frac{1 }{2 \sqrt{r^{2}+1}\, t}\,c
-\frac{1}{t  \sqrt{r^{2}+1}}\,K_0\\
&=-\frac{ 1}{t^{2} } \,c^2
+\frac{c-K_0}{t \left(r^{2}+1\right)^{\frac{1}{2}}}.
\end{align*}

If we choose $c\geq K_0+1$, we have $D_{\Op}\,(e^{-c^2/t}\,h_C^+)\geq 0$.

\bigskip

If we set $d=0$ and $\S =1$ then using $r\geq 4\, K_0-1$, \eqref{dgen} becomes, 
\begin{align*}
\frac{D_{\Op}\,h_C^-}{h_C^-}
&=
-\frac{ r^2}{t^{2} (r^2+1)} \,c^2
-\frac{r^{3} s +2 r^{2} t +r^{2}+r s +3 t +1}{t^{2} \left(r^{2}+1\right)^{\frac{3}{2}}}\,c
-\frac{2 }{\left(1+r \right) \sqrt{r^{2}+1}\, t}\,K\,c
+\frac{r -s}{t r \left(1+r \right)}\,K
-\frac{s}{t r}\\
&\leq
-\frac{r^{3} s +2 r^{2} t +r^{2}+r s +3 t +1}{t^{2} \left(r^{2}+1\right)^{\frac{3}{2}}}\,c
+\frac{2}{t \sqrt{r^{2}+1}\, \left(1+r \right)}\,K_0\,c
+\frac{r }{t r \left(1+r \right)}\,K_0\\
&\leq
-\frac{1}{t\,\sqrt{r^2+1}}\,c
+\frac{1}{2\,t \sqrt{r^{2}+1}}\,c
+\frac{1}{t\,\sqrt{r^2+1}}\,K_0\leq
-\frac{c-1/2-K_0}{t\,\sqrt{r^2+1}}\leq0
\end{align*}
provided $c\geq 1/2+K_0$.

\bigskip
 
\noindent\paragraph{\bf Region D: $t \geq 1$, $s\leq r\leq t\leq rs$, $r\geq R_0$}~\\

Note that this implies that $s\geq 1$.  Indeed, $s=r s/r\geq t/r\geq 1$.  We also have $t\leq rs\leq r^2\leq r^3$.

\bigskip

\noindent\paragraph{\bf Subregion $D_1 = D$ with $t\geq s^2/2$}~\\

In that region, the estimate from Conjecture \ref{conjecture} is equivalent to 
\begin{align*}
h_{D_1}^\pm\eqdef \frac{e^{\frac{\S  c \,s^{2}}{t}} t^{-\frac{3}{2}+k} \phi_0(r)\, \phi_0  \left(s \right) e^{-\frac{\left(r -s \right)^{2}}{4 t}} e^{-k^{2} t}}{\left(r s \right)^{k}}.
\end{align*}

If we apply the operator $D_{\Op}$ and use Lemma \ref{G}, we obtain
\begin{align}
\frac{D_{\Op}\,h_{D_1}^\pm}{h_{D_1}^\pm}&=
{\left(\frac{1}{t}-\frac{s}{t r}+\frac{2 k}{r^{2}}\right) G 
+\frac{-\S  \,c \,s^{2}}{t^{2}}-\frac{1}{t}+\frac{k s}{t r}-\frac{k^{2}}{r^{2}}-\frac{k}{r^{2}}}\nonumber\\
&=
{\frac{\left(2 k t +r^{2}-r s \right) K}{t \,r^{2} \left(1+r \right)}+\frac{\S  \,c \,s^{2} r^{2}-k^{2} t^{2}+k r s t +k \,t^{2}-r s t}{t^{2} r^{2}}.}\label{dgen2}
\end{align}

\bigskip

If we set $\S =-1$ then \eqref{dgen2} becomes
\begin{align*}
\frac{D_{\Op}\,h_{D_1}^+}{h_{D_1}^+}
&=\frac{\left(2 k t +r^{2}-r s \right) K}{t \,r^{2} \left(1+r \right)}+\frac{c \,s^{2} r^{2}-k^{2} t^{2}+k r s t +k \,t^{2}-r s t}{t^{2} r^{2}}\\
&\geq -\frac{\left(2 k r^2 +r^{2}+r^2 \right) K_0}{t \,r^{3}}+\frac{c \,s^{2} r^{2}-k^{2} t^{2}-t rs}{t^{2} r^{2}}
\geq-\frac{\left(2 k +2 \right) K_0}{t \,r}+\frac{(c-k^2-1) \,r^{2} s^2}{t^{2} r^2}\\
&=\frac{(c-k^2-1)\,r s^2-\left(2 k +2 \right) K_0\,t}{r\,t^2}
\geq\frac{(c-k^2)-\left(2 k +2 \right) K_0}{r\,t}\,r s^2\geq 0
\end{align*}
($t \leq rs^2=rs s$ since $s\geq 1$) as long as $c\geq k^2+(2k+2)\,K_0$.

\bigskip

If we set $\S =1$ then \eqref{dgen2} becomes
\begin{align*}
\frac{D_{\Op}\,h_{D_1^-}}{h_{D_1}^-}&=\frac{\left(2 k t +r^{2}-r s \right) K}{t \,r^{2} \left(1+r \right)}+\frac{-c \,s^{2} r^{2}-k^{2} t^{2}+k r s t +k \,t^{2}-r s t}{t^{2} r^{2}}\\
&\leq \frac{\left(2 k t +r^{2}+r^2 \right) K_0}{t \,r^{3}}+\frac{-c \,s^{2} r^{2}+k r s t +k \,t^{2}}{t^{2} r^{2}}
\leq \frac{\left(2 k  +1+1 \right) K_0}{t \,r}+\frac{-c \,s^{2} r^{2}+k r s t +k \,t^{2}}{t^{2} r^{2}}\\
&=\frac{-c \,s^{2} r^{2}+k r s t +k \,t^{2}+\left(2 k  +2\right) K_0\,t\,r}{t^{2} r^{2}}
\leq \frac{-c \,s^{2} r^{2}+k (rs)^2 +k \,(rs)^2+\left(2 k  +2\right) K_0\,r^2 s^2}{t^{2} r^{2}}\\
&=  \frac{-c +2k+\left(2 k  +2\right) K_0}{t^{2}}\,s^2\leq 0
\end{align*}
as long as $c\geq 2k+(2k+2)\,K_0$.

\bigskip

\paragraph{\bf Subregion $D_2 = D$ with $t\leq s^2$}~\\

Recall from Section \ref{sec:cover} that $T_a=\{t\colon s^a/4\leq t\leq 2\,s^a\}$.
In the subregion $D_2 \cap \{(r,s,t)\,\colon\, t\in T_a  \}$,  the estimate from Conjecture \ref{conjecture} is equivalent to 
\begin{align}
h_{D_2,a}^\pm
\eqdef\ 
\frac{e^{\frac{-\S  c t}{s^{a}}} t^{-\frac{3}{2}+k} \phi_0(r) \phi_0(s) e^{-\frac{\left(r -s \right)^{2}}{4 t}} e^{-k^{2} t}}{\left(r s \right)^{k}}\label{Sa}
\end{align}

If we set $\S =-1$ then $1\leq e^{ct/s^a}\leq e^{2\,c}$ and \eqref{Sa} becomes
\begin{align*}
h_{D_2,a}^+=\frac{e^{-\frac{ c t}{s^{a}}} t^{-\frac{3}{2}+k} \phi_0(r) \phi_0(s) e^{-\frac{\left(r -s \right)^{2}}{4 t}} e^{-k^{2} t}}{\left(r s \right)^{k}}
\end{align*}
and
\begin{align*}
\frac{D_{\Op}\,h_{D_2,a}^+}{h_{D_2,a}^+}&=\left(\frac{1}{t}-\frac{s}{t r}+\frac{2 k}{r^{2}}\right) G +\frac{ c}{s^{a}}-\frac{1}{t}+\frac{k s}{t r}-\frac{k^{2}}{r^{2}}-\frac{k}{r^{2}}\\
&=\frac{\left(2 k t +r^{2}-r s \right) K}{t \,r^{2} \left(1+r \right)}+\frac{-k^{2} s^{a} t +k s \,s^{a} r +c \,r^{2} t +k \,s^{a} t -s r \,s^{a}}{t \,r^{2} s^{a}}\\
&\geq-\frac{\left(2 k r^2 +2\,r^{2} \right) K_0}{t \,r^3}+\frac{-k^{2} s^{a} r^2 +c \,r^{2} s^a/4  -r^2 \,s^{a}}{t \,r^{2} s^{a}}\\
&=-\frac{\left(2 k+2\right) K_0}{tr}+\frac{-k^{2}  +c /4  -1 }{t }=\frac{(c /4 -k^{2} -1)\,r- \left(2 k+2\right) K_0}{r\,t}\\
&\geq \frac{c /4 -k^{2} -1- \left(2 k+2\right) K_0}{r\,t}\geq0
\end{align*}
as long as $c\geq 4\,(k^2+1+\left(2 k+2\right) K_0)$.

If we set $\S =1$ then $e^{-c/2}\leq e^{-ct/s^a}\leq 1$ and \eqref{Sa} becomes
\begin{align*}
\frac{D_{\Op}\,h_{D_2,a}^-}{h_{D_2,a}^-}&=\left(\frac{1}{t}-\frac{s}{t r}+\frac{2 k}{r^{2}}\right) G +\frac{- c}{s^{a}}-\frac{1}{t}+\frac{k s}{t r}-\frac{k^{2}}{r^{2}}-\frac{k}{r^{2}}\\
&=\frac{\left(2 k t +r^{2}-r s \right) K}{t \,r^{2} \left(1+r \right)}-\frac{k^{2} s^{a} t -k s \,s^{a} r +c \,r^{2} t -k \,s^{a} t +s r \,s^{a}}{t \,r^{2} s^{a}}\\
&\leq \frac{\left(2 k r^2 +r^{2}+r^2 \right) K_0}{t \,r^{2} \left(1+r \right)}-\frac{ -k s \,s^{a} r +c \,r^{2} s^a/4 -k \,s^{a} t }{t \,r^{2} s^{a}}
=\frac{\left(2 k  +2 \right) K_0}{t \,r}-\frac{ -k s r +c \,r^{2}/4 -kt }{t \,r^{2} }\\
&\leq\frac{\left(2 k  +2 \right) K_0}{t \,r}-\frac{ (c/4-2k)\,r^2}{t \,r^{2} }
=\frac{ (-c/4+2k)\,r+\left(2 k  +2 \right) K_0}{t\,r}\leq \frac{ -c/4+2k+\left(2 k  +2 \right) K_0}{t\,r}\leq0
\end{align*}
as long as $c\geq 4\,(2k +\left(2 k  +2 \right) K_0)$.

\subsection{Gluing of regions A, B, C, D}

\begin{note}\label{chi}
In what follows, $\chi$ is a smooth bump (cut-off) non-negative function on $\R$ such that $\chi(t)=\left\lbrace\begin{array}{cl}1&t\leq 1\\0&t\geq 2\end{array}\right.$.
Since $\chi $ is smooth and constant outside the interval $[1,2]$, for each $j\geq 0$, there exists a constant $M_j>0$ such that the $j$-th derivative $|\chi^{(j)}(t)|\leq M_j$ for all $t$.
\end{note}

\begin{remark}\label{glue}
Given the functions $h_1$, \dots, $h_p$, we will glue them using a partition of unity $w_1(r,s,t)$, \dots, $w_p(r,s,t)$  consisting of smooth non-negative functions such that $\sum_{j=1}^p\,w_j(r,s,t)=1$.  We will ensure that the following properties hold true:
\begin{itemize}
\item[(i)] the functions $w_j(r,s,t)$ and their first and second order derivatives are universally bounded,
\item[(ii)] whenever $w_j(r,s,t)>0$, the function $h_j$ is equivalent to \eqref{estim:conjecture} and $h_j=O(1/t^2)$ for this value of $(r,s,t)$,

\item[(iii)] whenever $w_j(r,s,t)>0$, then $D_{\Op}\,h_j$ is of the correct sign (``non-negative'' or ``non-positive'' given the context).
\end{itemize}
\end{remark}

\bigskip

\noindent\paragraph{\bf Gluing of the subregion $D_2 = D$ with\ $t\leq s^2$}
~\\

Let $m\in\N\setminus\{0\}$ and $u=1$, \dots, $2^{m+1}-2^m+1$. Let us call $h_{u,m}^\pm$ the function 
$h_{D_2,a}$ from \eqref{Sa}, with $a=(2^m+u-1)/2^m$, constructed so that $D_{\Op}\,h_{u,m}^\pm$ is of the correct sign (``non-negative'' or ``non-positive'' given the context) on the interval
$T_{(2^m+u-1)/2^m}$.   Assume $s\in [4^{2^{m-1}},4^{2^{m}}]$ and consider 
$t\in S_{2^m/2^m}\cup S_{(2^m+1)/2^m}$. 
Write $h^{\pm}=u_1(t)\,h_{1,m}^{\pm}+(1-u_1(t))\,h_{2,m}^{\pm}$ where 
\begin{align*}
u_1(t)=\chi\left(\frac{t+2\,s-s^{(2^m+1)/2^m}/2}{2\,s-s^{(2^m+1)/2^m}/4}\right).
\end{align*}
(refer to Notation \ref{chi}).  We have $|u_1'(t)|\leq M_1/(8\,s-s^{(2^m+1)/2^m}/4)=M_1/\left((s/4)\,(8-s^{1/2^m}/4)\right)\leq M_1/((1/4)\,(8-4))=M_1$ on $[4^{2^{m-1}},4^{2^{m}}]$.
In the same manner,  $|u_1''(t)|\leq  M_2$ on $[4^{2^{m-1}},4^{2^{m}}]$.

\begin{itemize}
\item If $s\leq t\leq s^{(2^m+1)/2^m}/4$ then $t\in S_{2^m/2^m}\subset T_{2^m/2^m}$, $u_1(t)=1$ and $h=h_{1,m}$.
\item If $2\,s\leq t\leq 2\,s^{(2^m+1)/2^m}$ then  $t\in S_{(2^m+1)/2^m}\subset T_{(2^m+1)/2^m}$, $u_1(t)=0$ and $h=h_{2,m}^\pm$.  
\item If $\max\{s,s^{(2^m+1)/2^m}/4\}\leq t\leq 2\,s$ then  $t\in S_{2^m/2^m}\cap T_{(2^m+1)/2^m}$ and $h$ is a convex combination of $h_{1,m}^\pm$ and $h_{2,m}^{\pm}$.
\end{itemize}

We continue the construction inductively. Suppose that $h^\pm$ has been constructed in this manner for $t\in \cup_{i=2^m}^{N}\,S_{i/2^m}\ \eqdef\ S(N)$ with $N<2^{m+1}$
and that $u_i(t)>0$ implies $t\in S_{(2^m+i)/2^m}\cup T_{(2^m+i+1)/2^m}$. From Lemma \ref{cover}, this implies that no more than three $u_i$'s can be nonzero at the same time.  We then define $\tilde{h}^{\pm}=v_1(t)\,h^\pm+(1-v_1(t))\,h_{N+1,m}^\pm$
where 
\begin{align*}
v_1(t)=\chi\left(\frac{t+2\,s^{M/2^m}-s^{(M+1)/2^m}/2}{2\,s^{M/2^m}-s^{(M+1)/2^m}/4}\right).
\end{align*}
\begin{itemize}
\item If $s\leq t\leq s^{(N+1)/2^m}/4$ then $t\in S(N)$,
$v_1(t)=1$ and $\tilde{h}^{\pm}=h^{\pm}$.
\item If $s^{(N+1)/2^m}/4\leq t\leq 2\,s^{(N+1)/2^m}$ then  $t\in S_{(2^m+1)/2^m}\subset T_{(2^m+1)/2^m}$, $u_1(t)=0$ and $\tilde{h}^{\pm}= h_{N+1,m}^\pm $.  
\item If $\max\{s, s^{(N+1)/2^m}/4\}\leq t\leq 2\,s^{(N+1)/2^m}$ then  $t\in S_{2^m/2^m}\cap T_{(2^m+1)/2^m}$ and 
$\tilde h^\pm$ is a linear combination of $h^\pm$ and $h_{N+1,m}^\pm $.
\end{itemize}
We have $|v_1'(t)|\leq M_1/(2\,s^{M/2^m}-s^{(M+1)/2^m}/4)=M_1/((s^{M/2^m}/4)\,(8-s^{1/2^m}))\leq M_1/((1/4)(8-4))=M_1$ on $[4^{2^{m-1}},4^{2^{m}}]$.
In the same manner,  $|v_1''(t)|\leq  M_2$ on $[4^{2^{m-1}},4^{2^{m}}]$.

Continuing the process, we have a partition of unity $w_1$, \dots, $w_{2^{m+1}-2^m+1}$
so that (see Covering Lemma)
no more than three of the $w_i$'s (and therefore no more than three $w_i'$'s and $w_i''$'s) are nonzero for a given value of $t$:
\begin{align*}
h^\pm=\sum_{i=1}^{2^{m+1}-2^m+1}\,w_i(t)\,h_{2^m+i-1,m}^\pm.
\end{align*}

Assume that $Dh_{i,m}^+\geq 0$ for each $i$.  We have
\begin{align*}
D_{\Op}h^+&=\sum_{i=1}^{2^{m+1}-2^m+1}\,w_i(t)\,D_{\Op}h_{i,m}^++\sum_{i=1}^{2^{m+1}-2^m+1}\,w_i'(t)\,h_{i,m}^+
\geq \sum_{i=1}^{2^{m+1}-2^m+1}\,w_i'(t)\,h_{i,m}^+\\
&\geq -3\,M_1\,\max_{{i=1}\dots{2^{m+1}-2^m+}}h_{i,m}^+
=O(1/t^2).
\end{align*}

If $c>0$ is large enough, $D_{\Op}(e^{-c/t}\,h^+)\geq 0$.  If $D_{\Op}h_{i,m}^-\leq 0$ for each $i$, we proceed in a similar manner except that we multiply $h^-$ by  $e^{c/t}$.
Note that the function $h_{D_2}^\pm
\ \eqdef\ h^\pm$ thus built satisfies
\begin{align*}
 h_{D_2}^\pm=  C(s,t)\,
\frac{ t^{-\frac{3}{2}+k} \phi_0(r) \phi_0(s) e^{-\frac{\left(r -s \right)^{2}}{4 t}} e^{-k^{2} t}}{\left(r s \right)^{k}}.
\end{align*}
where $C(s,t)$ is bounded and does not depend on $r$.\\

\bigskip

\noindent\paragraph{\bf Gluing of the Region D}~\\

We now glue the function $h_{D_1}^\pm$ constructed
for the subregion $D_1$ of the region D such that $t\geq s^2/2$, with the function $h_{D_2}^\pm$ (the one we just constructed) for the subregion $t\leq s^2$.
Let 
\begin{align*}
h_D^\pm\ \eqdef\ \chi(s^2/t)\,h_{D_1}^\pm+(1-\chi(s^2/t))\,h_{D_2}^\pm.
\end{align*}
Note that
\begin{align*}
D_{\Op} h_D^\pm&=\chi(s^2/t)\,D_{\Op}h_{D_1}^\pm+(1-\chi(s^2/t))\,D_{\Op}h_{D_2}^\pm-\chi'(s^2/t)\,(s^2/t^2)\,h_{D_1}^\pm+\chi'(s^2/t)\,(s^2/t^2)\,h_{D_2}^\pm\\
\text{and}\\
&|-\chi'(s^2/t)\,(s^2/t^2)\,h_{D_1}^\pm+\chi'(s^2/t)\,(s^2/t^2)\,h_{D_2}^\pm|
\leq M_1\,(h_{D_1}^\pm+h_{D_2}^\pm)
\end{align*}
which is $O(1/t^2)$ ($\chi'(s^2/t)=0$ unless $1\leq s^2/t\leq 2$).  We proceed as before and rename the resulting function $h_D^\pm$.\\

\bigskip

\noindent\paragraph{\bf Gluing of the Region C and Region D}~\\

We now connect the regions $C$ and $D$.  Write
\begin{align*}
h_{CD}^\pm \ \eqdef\ \chi(r s/t)\,h_C^\pm+(1-\chi(r s/t))\,h_D^\pm\\
&+\end{align*}
and note that $D_{\Op}h_{CD}^\pm=\chi(r s/t)\,D_{\Op}h_C^\pm+(1-\chi(r s/t))\,D_{\Op}h_D^\pm+R(r,s,t)$ where
\begin{align*}
R(r,s,t)&
{=\left(-\frac{\chi''(r s/t) s^{2}}{t^{2}}-\frac{\chi'(r s/t) r s}{t^{2}}-\frac{2 k \cosh  \left(r \right) \chi'(r s/t) s}{\sinh  \left(r \right) t}\right) h_C^\pm (r,s,t)}\\
&
{+\left(\frac{\chi''(r s/t) s^{2}}{t^{2}}+\frac{\chi'(r s/t) r s}{t^{2}}+\frac{2 k \cosh  \left(r \right) \chi'(r s/t) s}{\sinh  \left(r \right) t}\right) h_D^\pm (r,s,t)}\\
&
{-\frac{2 \chi'(r s/t) s \left(\frac{\partial}{\partial r}h_C^\pm (r,s,t)\right)}{t}+\frac{2 \chi'(r s/t) s \left(\frac{\partial}{\partial r}h_D^\pm (r,s,t)\right)}{t}.}
\end{align*}
Note that $\chi'(r s/t)$ and $\chi''(r s/t)$ are zero unless $1<rs/t<2$ and that they are universally bounded by $M_1$ and $M_2$ respectively. Furthermore, $s\leq r\leq t$ and $r\geq1$. To show that $R(r,s,t)=O(1/t^2)$, it suffices then to examine $\frac{\partial}{\partial r}h_C^\pm (r,s,t)$ and $\frac{\partial}{\partial r}h_D^\pm (r,s,t)$.  It is not difficult to see using Lemma \ref{G} and Lemma \ref{crude} that both these functions are $O(1/t^2)$. 

\bigskip

\noindent\paragraph{\bf Gluing of Region B with Regions C and D}~\\

We choose $R>R_0$ (in the definition of Region $B$) say $R=R_0+1$.
Let
\begin{align*}
h_{BCD}^\pm
\ \eqdef\ 
\chi (r -R_{0}+1) h_{B}^\pm(r,s,t)+(1-\chi (r -R_{0}+1) ) h_{CD}^\pm(r,s,t).
\end{align*}

We have $D_{\Op}\,h_{BCD}^\pm=h_{BCD}^\pm=\chi (r -R_{0}+1) D_{\Op}\,h_{B}^\pm(r,s,t)+(1-\chi (r -R_{0}+1) ) D_{\Op}\,h_{CD}^\pm(r,s,t)+R(r,s,t)$ where 
\begin{align*}
R(r,s,t)&
{=-\chi''(r-R_0+1) h_{B}^\pm(r,s,t)-2 \chi'(r-R_0+1) \left(\frac{\partial}{\partial r}h_{B}^\pm(r,s,t)\right)}\\
&
{+\chi''(r-R_0+1) h_{CD}^\pm(r,s,t)+2 \chi'(r-R_0+1) \left(\frac{\partial}{\partial r}h_{CD}^\pm(r,s,t)\right)}\\
&
{-\frac{2 k \cosh \! \left(r \right) \chi'(r-R_0+1) h_{B}^\pm(r,s,t)}{\sinh \! \left(r \right)}+\frac{2 k \cosh \! \left(r \right) \chi'(r-R_0+1) h_{CD}^\pm(r,s,t)}{\sinh \! \left(r \right)}.}
\end{align*}

Note that $\chi'(r -R_{0}+1)$ and $\chi''(r -R_{0}+1)$ are zero unless $1<r -R_{0}+1<2$ and that they are universally bounded by $M_1$ and $M_2$ respectively.  To show that $R(r,s,t)=O(1/t^2)$, it suffices then to examine $\frac{\partial}{\partial r}h_B^\pm (r,s,t)$ and $\frac{\partial}{\partial r}h_{CD}^\pm (r,s,t)$.  It is not difficult to see using Lemma \ref{G} and Lemma \ref{crude} that both these functions are $O(1/t^2)$.  
\\

\bigskip

\noindent\paragraph{\bf Gluing of Regions A, B, C, D}~\\

\bigskip

We take 
\begin{align*}
h_{ABCD}^\pm&
\ \eqdef\ \chi (t/r) h_{A}^\pm(r,s,t)+(1-\chi (t/r) ) h_{BCD}^\pm(r,s,t).
\end{align*}

We have $h_{ABCD}^\pm=\chi (t/r) D_{\Op}\,h_{A}^\pm(r,s,t)+(1-\chi (t/r) ) D_{\Op}\,h_{BCD}^\pm(r,s,t)+R(r,s,t)$ where
\begin{align*}
R(r,s,t)&=
{ \left(\frac{\chi'\! \left(\frac{t}{r}\right)}{r}-\frac{\chi''\! \left(\chi \right)\! \left(\frac{t}{r}\right) t^{2}}{r^{4}}-\frac{2 \chi'\! \left(\frac{t}{r}\right) t}{r^{3}}+\frac{2 k \cosh \! \left(r \right) \chi'\! \left(\frac{t}{r}\right) t}{\sinh \! \left(r \right) r^{2}}\right) h_{A}\! \left(r , s , t\right)}\\
&
{+\left(-\frac{\chi'\! \left(\frac{t}{r}\right)}{r}+\frac{\chi''\! \left(\chi \right)\! \left(\frac{t}{r}\right) t^{2}}{r^{4}}+\frac{2 \chi'\! \left(\frac{t}{r}\right) t}{r^{3}}-\frac{2 k \cosh \! \left(r \right) \chi'\! \left(\frac{t}{r}\right) t}{\sinh \! \left(r \right) r^{2}}\right) h_{BCD}\! \left(r , s , t\right)}\\
&
{-\frac{2 \chi'\! \left(\frac{t}{r}\right) t \left(\frac{\partial}{\partial r}h_{BCD}\! \left(r , s , t\right)\right)}{r^{2}}+\frac{2 \chi'\! \left(\frac{t}{r}\right) t \left(\frac{\partial}{\partial r}h_{A}\! \left(r , s , t\right)\right)}{r^{2}}.}
\end{align*}

Note that $\chi'(t/r)$ and $\chi''(t/r)$ are zero unless $1<t/r<2$ so that $r>t\geq (R^2+1)^{3/2}/2$ and that they are universally bounded by $M_1$ and $M_2$ respectively.  To show that $R(r,s,t)=O(1/t^2)$, it suffices then to examine $\frac{\partial}{\partial r}h_A^\pm(r,s,t)$ and $\frac{\partial}{\partial r}h_{BCD} (r,s,t)$.  It is not difficult to see using Lemma \ref{G} and Lemma \ref{crude} that both these functions are $O(1/t^2)$. 

\bigskip

\subsection{Application of the parabolic principle to the upper Region}

We now select $M>(R^2+1)^{3/2}$ for Region 0, say $M=(R^2+1)^{3/2}+1$.

Let $t=T_1=(R^2+1)^{3/2}+1/2$.  To fix things, we assume that the ``correct sign'' is non-negative \emph{i.e.} $D_{\Op}\,h_{ABCD}^+\geq 0$.

By the study of Region 0, we know that $h_0\geq   C h_{\Op}$ on the Region 0.

Now, $h_{ABCD}^+$ and $h_0$ are both equivalent to \eqref{estim:conjecture} for $t=T_1$.  Hence, there exists $C>0$ such that $C\,h_{ABCD}^+(r,s,T_1)-h_0(r,s,T_1)\geq 0$ and, therefore,  $C\,h_{ABCD}^+(r,s,T_1)-h_{\Op}(r,s,T_1)\geq 0$. By the parabolic minimum principle, we find that $C\,h_{ABCD}^+(r,s,t)\geq h_{\Op}(r,s,t)$ for all $t\geq T_1$.

Taking the ``correct sign'' to be non-positive, we can in the same manner, find $C>0$ such that $ h_{\Op}(r,s,t)\geq C\,h_{ABCD}^-(r,s,t)$ for all $t\geq T_1$.

This ends the proof of Theorem \ref{Main}.


\end{document}